\title[Global Existence of Strong Solutions]{Global Existence of Strong Solutions to the Kinetic Cucker--Smale Model Coupled with the Stokes Equations}
\author[C. Jin]{Chunyin Jin}
\address[Chunyin Jin]{\newline College of Science,\newline China Agricultural University,\newline Beijing 100083, P. R. China.}
\email{jinchunyin@163.com}
\newtheorem{theorem}{Theorem}[section]
\newtheorem{definition}{Definition}[section]
\newtheorem{lemma}{Lemma}[section]
\newtheorem{proposition}{Proposition}[section]
\newtheorem{remark}{Remark}[section]
\newcommand{\bbr}{\mathbb R}
\newcommand{\bbn}{\mathbb N}
\newcommand{\bx}{\mbox{\boldmath $x$}}
\newcommand{\by}{\mbox{\boldmath $y$}}
\newcommand{\bu}{\mbox{\boldmath $u$}}
\newcommand{\bv}{\mbox{\boldmath $v$}}
\newcommand{\bj}{\mbox{\boldmath $j$}}
\begin{document}

\date{\today}

\keywords{Global existence; strong solutions; kinetic Cucker--Smale model; the Stokes equations}

\thanks{2010 Mathematics Subject Classification: 35A01, 35B45, 35D35, 35Q35, 35Q92}

\begin{abstract}
In this paper, we investigate existence of global-in-time strong solutions to the kinetic Cucker--Smale model coupled with the Stokes equations in the whole space. By introducing a weighted Sobolev space and using space-time estimates for the linear non-stationary Stokes equations, we present a complete analysis on existence of global-in-time strong solutions to the coupled model, without any smallness requirements on initial data.
\end{abstract}

\maketitle \centerline{\date}
%%%%%%%%%%%%%%%%%%%%%%%%%%%%%%%%%%%%%%%%%%%%%%%%%%%%%%%%%%%%%%%%%%%%%%%%%%%%%%%%%%
%
%                              Sect 1. Introduction
%
%%%%%%%%%%%%%%%%%%%%%%%%%%%%%%%%%%%%%%%%%%%%%%%%%%%%%%%%%%%%%%%%%%%%%%%%%%%%%%%%%%
\section{Introduction}\label{sec-intro}
\setcounter{equation}{0}
In the present paper, we are concerned with global existence of strong solutions to the following kinetic Cucker--Smale model coupled with the Stokes equations in the whole space $\bbr^3$. For convenience,  $\nabla$ are abbreviated for $\nabla_{\bx}$, in someplace of the paper. The coupled kinetic-fluid model reads as
\begin{equation} \label{eq-cs-s}
     \begin{dcases}
         f_t + \bv \cdot \nabla_{\bx} f+ \nabla_{\bv} \cdot (L[f]f+(\bu-\bv)f)=0,\\
         \bu_t+\nabla P=\Delta \bu +\int_{\bbr^3}f(\bv-\bu)d\bv, \\
         \nabla \cdot \bu=0,
     \end{dcases}
\end{equation}
subject to the initial data
\begin{equation} \label{eq-sys-inidata}
  f|_{t=0}=f_0, \quad \bu|_{t=0}=\bu_0,
\end{equation}
with $\bu_0$ satisfying the compatibility condition $\nabla \cdot \bu_0=0$. Here $f(t,\bx, \bv)$ is the particle distribution function in phase space $(\bx, \bv)$ at the time $t$, $(\bx, \bv) \in \bbr^3\times \bbr^3$. $\bu$ and $P$ represent the fluid velocity and pressure, respectively. $L[f]$ is given by
\[
 L[f](t, \bx, \bv)=\int_{\bbr^{6}}\varphi(|\bx-\by|)f(t, \by, \bv^*)(\bv^*-\bv)d \by d \bv^*,
\]
where $\varphi(\cdot) \in C_{b}^{1}$ is a positive non-increasing function, standing for the interaction kernel. Without loss of generality, we postulate that
\[
 \max\{|\varphi|, |\varphi'|\} \le 1
\]
in the sequel.

Recently, collective behaviors of multi-agent systems have attracted much attention from researchers in diverse fields, including biology, physics, mathematics and control theory. People wish to understand mechanisms that lead to these phenomena, such as flocking and milling, by modeling, numerical simulation and mathematical analysis. In order to provide a justification for flocking, i.e., a multi-agent system reaches a consensus time-asymptotically, Cucker and Smale \cite{Cucker2007} put forward a system of ODEs, now entitled with their names, which resembles a Newton type $N$-body system. Moreover, they showed that flocking can be achieved under some conditions on initial data. Later, Ha--Liu \cite{Ha2009} presented a complete analysis on flocking using the Lyapunov functional approach, and further rigorously derived the kinetic Cucker--Smale model by taking the mean-field limit to the particle model. Then Carrillo et al. \cite{Carrillo2010} refined the results in \cite{Ha2009}, and provided an unconditional flocking theorem for measure-valued solutions to the kinetic Cucker--Smale model, with the same strength estimates valid as for the particle model. Along this line, Canizo et al. \cite{Canizo2011} contributed an elegant analysis on well-posedness of measure-valued solutions to some kinetic models of collective motion, by using the modern theory of optimal transport. A very recent research trend for the Cucker--Smale model from particle to kinetic and hydrodynamic descriptions has been launched. We refer readers to \cite{ha2014global}\cite{Ha2014}\cite{ha2015emergent}\cite{jin2015well}\cite{Jin} for studies related to the hydrodynamic Cucker--Smale model. If considering the Brownian effect in the modeling, the resulting model will contain a diffusive term. This kind of kinetic model is of the Fokker--Planck type, which admits an equilibrium. Duan \cite{duan2010kinetic} studied the stability and convergence rate of classical solutions to an equilibrium under small initial perturbations, by using the micro-macro decomposition. The interested readers can consult the review papers \cite{carrillo2010particle}\cite{choi2017emergent} for the state of the art in this research topic.

As in fact, particles are usually immersed in surrounding media, such as gas, water, and electromagnetic waves, etc. Taking into account the influence of ambient media, it is reasonable to incorporate these neglected effects in the modeling. Such coupled kinetic-fluid models have gained increasing interest due to their applications in biotechnology, medicine and sedimentation phenomena \cite{choi2017emergent}. The kinetic Cucker--Smale model coupled with the Stokes equations, incompressible Navier--Stokes equations, and isentropic compressible Navier--Stokes equations was introduced in \cite{Bae2012}\cite{bae2014asymptotic}\cite{bae2014global}\cite{bae2016global}, where existence of weak or strong solutions was investigated in spatial-periodic domain. However, the more physically relevant Cauchy problem was rarely touched, since the Poincar\'e inequality and the positive lower bound for the interaction kernel were crucially used in most previous analyses. Regretfully, these properties are difficult to guarantee in the whole space situation, which gives rise to some obstacles in the analysis of the Cauchy problem.

Recently, the author initiated the program to study the kinetic Cucker--Smale model and related coupled models with fluids in the whole space. In \cite{Jin2018}, Jin established the well-posedness of weak and strong solutions to the kinetic Cucker--Smale model by developing an unified framework, where weighted Sobolev
spaces were introduced to overcome the difficulty induced by unboundedness of the domain. Along this direction, then the author \cite{jin2019local} investigated the blowup criteria for strong solutions to the kinetic Cucker--Smale model coupled with the isentropic compressible Navier--Stokes equations in the whole space. It was shown that the integrability in time of the spatial $W^{1,\infty}$-norm on the fluid velocity controlled the blowup of strong solutions to the coupled model. Based on this observation, we are intended to explore the global-in-time strong solutions to the kinetic Cucker--Smale model coupled with the Stokes equations as the beginning. Before stating our theorem, we introduce the following weighted Sobolev space.
\begin{multline*}
 H_{\omega}^1(\bbr^3 \times \bbr^3):=\bigg\{h(\bx,\bv):\ h \in L_{\omega}^2(\bbr^3 \times \bbr^3), \\ \nabla_{\bx} h\in L_{\omega}^2(\bbr^3 \times \bbr^3),\ \nabla_{\bv}h \in L_{\omega}^2(\bbr^3 \times \bbr^3)\bigg\},
\end{multline*}
\[
 |h|_{H_{\omega}^1}^2:=|h|_{L_{\omega}^2}^2+|\nabla_{\bx}h|_{L_{\omega}^2}^2+|\nabla_{\bv}h|_{L_{\omega}^2}^2,
\]
where
\[
 |h|_{L_{\omega}^2}:=\left(\int_{\bbr^6}h^2(\bx,\bv)\omega(\bx,\bv)d\bx d\bv\right)^{\frac12},
\]
and
\[
 \omega(\bx,\bv):=(1+\bv^2)^{2\alpha+1}(1+\bx^2+\bv^2)^{3\gamma},\quad \alpha>1,\  \gamma>1.
\]
The weight $\omega(\bx,\bv)$ is introduced to overcome the difficulty arising from the coupling term. The reader will understand why we introduce such type of weight from the derivation of \eqref{eq-app-dif-est} in Sect. \ref{set-loc-exist}. Of course, the weight is not unique and even optimal, but it is convenient for our analysis. In this paper, we adopt the following simplified notations for homogeneous Sobolev Spaces.
\[
 D^1(\bbr^3):=\left\{u\in L^6(\bbr^3):\ \nabla u \in L^2(\bbr^3)\right\},
\]
\[
 D^2(\bbr^3):=\left\{u\in L_{loc}^1(\bbr^3):\ \nabla^2u \in L^2(\bbr^3)\right\},
\]
\[
 D^{2,p}(\bbr^3):=\left\{u\in L_{loc}^1(\bbr^3):\ \nabla^2u \in L^p(\bbr^3)\right\}, \quad 1\le p\le \infty.
\]
Next we give the definition of strong solutions to \eqref{eq-cs-s}-\eqref{eq-sys-inidata}.
\begin{definition} \label{def-stro}
Let $3<q\le 6, 0<T\le \infty$. $(f(t,\bx,\bv), \bu(t,\bx),\nabla P(t,\bx))$ is said to be a strong solution to \eqref{eq-cs-s}-\eqref{eq-sys-inidata}, if
\[
\begin{aligned}
  &f(t,\bx,\bv)\in C([0,T];H_{\omega}^1(\bbr^3 \times \bbr^3)),\\
  &\bu(t,\bx)\in C([0,T];H^{2}(\bbr^3))\cap L^2(0,T;D^{2,q}(\bbr^3)),\\
  &\bu_t(t,\bx)\in C([0,T];L^{2}(\bbr^3))\cap L^2(0,T;L^{q}(\bbr^3)),\\
  &\nabla P(t,\bx)\in C([0,T];L^{2}(\bbr^3))\cap L^2(0,T;L^{q}(\bbr^3)),
\end{aligned}
\]
and
\[
 \begin{gathered}
   \int_0^{\infty}\int_{\bbr^6}f \phi_t d\bx d\bv dt+\int_0^{\infty}\int_{\bbr^6}f \bv \cdot \nabla_{\bx}\phi d\bx d\bv dt\\+\int_0^{\infty}\int_{\bbr^6}\Big(fL[f]+f(\bu-\bv) \Big)\cdot \nabla_{\bv}\phi d\bx d\bv dt
   +\int_{\bbr^6}f_0 \phi(0) d\bx d\bv=0,
 \end{gathered}
\]
for all $\phi(t,\bx,\bv)\in C_0^{\infty}([0,T)\times \bbr^3 \times \bbr^3)$;
\[
 \begin{gathered}
   \int_0^{\infty}\int_{\bbr^3}\bu\cdot \boldsymbol{\psi}_t d\bx dt-\int_0^{\infty}\int_{\bbr^3}\bu \cdot \Delta \boldsymbol{\psi} d\bx dt-\int_0^{\infty}\int_{\bbr^6}f(\bv-\bu) \cdot \boldsymbol{\psi} d\bx d\bv dt\\
   +\int_{\bbr^3}\bu_0 \cdot\boldsymbol{\psi}(0) d\bx=0,
 \end{gathered}
\]
for all $\boldsymbol{\psi}(t,\bx)\in C_{0,\sigma}^{\infty}([0,T)\times \bbr^3)$, where
\[
 C_{0,\sigma}^{\infty}([0,T)\times \bbr^3):=\bigg\{\boldsymbol{\psi}(t,\bx): \ \boldsymbol{\psi}(t,\bx)\in C_0^{\infty}([0,T)\times \bbr^3), \ \nabla \cdot \boldsymbol{\psi}=0 \bigg\}.
\]
\end{definition}

Denote by $B(R_0)$ the ball centered at the origin with a radius $R_0$. Then the theorem in this paper can be stated as follows.
\begin{theorem} \label{thm-exist}
Let $0<R_0<\infty$. Assume the initial data $f_0(\bx,\bv)\ge 0$, $f_0(\bx,\bv) \in H_{\omega}^1(\bbr^3 \times \bbr^3)\cap L^{\infty}(\bbr^3 \times \bbr^3)$, and $\bu_0(\bx) \in H^{2}(\bbr^3)$,
with the $\bv$-support of $f_0(\bx,\bv)$ satisfying
\[
 \text{supp}_{\bv}f_0(\bx,\cdot)\subseteq B(R_0) \quad \text{for all $\bx \in \bbr^3$}.
\]
Then the Cauchy problem \eqref{eq-cs-s}-\eqref{eq-sys-inidata} admits a unique global-in-time strong solution in the sense of Definition \ref{def-stro}.
\end{theorem}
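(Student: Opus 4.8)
The plan is to prove Theorem~\ref{thm-exist} in three stages: a local existence and uniqueness theory built on a decoupled iteration, a collection of \emph{a priori} estimates that stay finite on every finite time interval, and a continuation argument that glues these together into a global solution. For the \emph{local theory} I would construct an approximating sequence $(f^{n},\bu^{n})$ by freezing the fluid velocity in the kinetic equation and the kinetic density in the Stokes system: given $\bu^{n}$, solve the linear transport equation for $f^{n+1}$ along the characteristics $\dot\bx=\bv$, $\dot\bv=L[f^{n}]+\bu^{n}-\bv$, and given $f^{n+1}$ solve the non-stationary Stokes equations for $\bu^{n+1}$. The damping term $-\bv$ keeps $\text{supp}_{\bv}f^{n+1}$ inside a ball whose radius obeys a linear differential inequality driven by $\|\bu^{n}\|_{L^{\infty}_{\bx}}$ and $\|f^{n}\|_{L^{1}}$; Liouville's formula together with $\nabla_{\bv}\cdot(L[f^{n}]+\bu^{n}-\bv)=-3-3\int\varphi(|\bx-\by|)f^{n}\,d\by d\bv^{*}\le 0$ gives $f^{n+1}\ge 0$ and an exponential-in-time $L^{\infty}$ bound, and differentiating the transport equation in $\bx$ and $\bv$ and closing a weighted $L^{2}$ estimate produces a Gronwall inequality for $|f^{n+1}|_{H^{1}_{\omega}}$. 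For the fluid step I would invoke the space-time ($L^{2}_{t}L^{q}_{\bx}$, $3<q\le 6$) maximal regularity of the linear non-stationary Stokes operator to bound $\bu^{n+1}$ in $C_{t}H^{2}\cap L^{2}_{t}D^{2,q}$ and $\bu^{n+1}_{t},\nabla P^{n+1}$ in $C_{t}L^{2}\cap L^{2}_{t}L^{q}$ in terms of the source $\int f^{n+1}(\bv-\bu^{n+1})\,d\bv$, which is controlled by the compact $\bv$-support and the $H^{1}_{\omega}\cap L^{\infty}$ bound on $f^{n+1}$. On a short interval $[0,T_{*}]$ these bounds are uniform in $n$, and the contraction is closed in the lower-order metric $\sup_{t}\big(|f^{n+1}-f^{n}|_{L^{2}_{\omega}}+\|\bu^{n+1}-\bu^{n}\|_{L^{2}}\big)$; it is precisely in estimating the difference equation --- where the nonlocal term $L[f^{n}]-L[f^{n-1}]$ grows linearly in $|\bv|$ and the coupling pairs the crudely controlled $\bu^{n}-\bu^{n-1}$ against $f^{n}$ --- that the weight $\omega$ becomes indispensable; this is the computation behind \eqref{eq-app-dif-est}. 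Passing to the limit yields a local strong solution in the sense of Definition~\ref{def-stro}, and running the same difference estimate between two strong solutions with the same data gives uniqueness.

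For the \emph{global a priori estimates} the essential structural point is that the fluid equation is \emph{linear}, so its norms depend linearly on the kinetic source and no smallness is needed. On an arbitrary interval $[0,T]$ I would assemble: conservation of mass $\|f(t)\|_{L^{1}}=\|f_{0}\|_{L^{1}}$ and positivity of $f$; the energy identity
\[
 \frac{d}{dt}\Big(\tfrac12\!\int f|\bv|^{2}\,d\bx d\bv+\tfrac12\!\int|\bu|^{2}\,d\bx\Big)
 =-\!\int|\nabla\bu|^{2}-\tfrac12\!\iint\varphi(|\bx-\by|)\,ff^{*}|\bv-\bv^{*}|^{2}-\!\int f|\bv-\bu|^{2}\le 0
\]
(with $f^{*}=f(t,\by,\bv^{*})$), which bounds $\int f|\bv|^{2}$, $\|\bu\|_{L^{2}}$ and $\int_{0}^{\infty}\|\nabla\bu\|_{L^{2}}^{2}\,dt$; the exponential $L^{\infty}$ bound on $f$ and the bound on the radius $R_{\bv}(t)$ of $\text{supp}_{\bv}f$ obtained from $\dot R_{\bv}\le(\|f_{0}\|_{L^{1}}-1)R_{\bv}+\|\bu\|_{L^{\infty}_{\bx}}$; the Stokes estimates, which control $\|\bu\|_{H^{2}}$ and $\|\bu\|_{L^{2}_{t}D^{2,q}}$ --- hence, since $q>3$, $\nabla_{\bx}\bu$ in $L^{2}_{t}L^{\infty}_{\bx}\subset L^{1}_{t}L^{\infty}_{\bx}$ on $[0,T]$ --- in terms of the preceding quantities; and finally the weighted estimate for $|f(t)|_{H^{1}_{\omega}}$, closed by Gronwall with a coefficient that is a polynomial in $R_{\bv}(t)$, $\|f(t)\|_{L^{\infty}}$, $\|f_{0}\|_{L^{1}}$ and $\|\nabla_{\bx}\bu(t)\|_{L^{\infty}_{\bx}}$, the last being integrable in time. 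Since none of these bounds blows up in finite time, the local theory can be iterated step by step to reach any prescribed time, i.e.\ the maximal time of existence is infinite, which is the assertion of Theorem~\ref{thm-exist}.

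I expect the \emph{main obstacle} to be the weighted $H^{1}_{\omega}$ estimate for $f$ together with its compatibility with the fluid regularity, since the two must be closed simultaneously. Differentiating the kinetic equation generates: (i) a commutator of $\bv\cdot\nabla_{\bx}$ with $\omega$, which is bounded only because of the mixed factor $(1+\bx^{2}+\bv^{2})^{3\gamma}$ in $\omega$ (it keeps $\bv\cdot\nabla_{\bx}\omega/\omega$ uniformly bounded); (ii) a term of the form $(\nabla_{\bx}\bu)\,\nabla_{\bv}f$, which forces $\nabla_{\bx}\bu\in L^{1}_{t}L^{\infty}_{\bx}$ and is exactly why the Stokes part must be run at the $D^{2,q}$ level with $q>3$ so that $W^{1,q}\hookrightarrow L^{\infty}$, whereas $H^{2}\not\hookrightarrow W^{1,\infty}$ in three dimensions; and (iii) contributions from $\nabla_{\bx,\bv}L[f]$ that grow polynomially in $|\bv|$ and are absorbed using the factor $(1+\bv^{2})^{2\alpha+1}$ in $\omega$ together with the compactness of $\text{supp}_{\bv}f$. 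Making all of these balance with a time coefficient that is merely integrable (not just finite), while simultaneously keeping the fluid source $\int f(\bv-\bu)\,d\bv$ within the $L^{2}_{t}L^{q}_{\bx}$ class required to re-feed the Stokes estimates, is the heart of the argument.
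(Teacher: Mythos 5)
Your overall scaffolding (iterate a decoupled linearization for local existence; derive a priori bounds on every $[0,T]$; continue) matches the paper, and the local-existence sketch is essentially right. However, the a priori estimate section, as you have written it, is \emph{circular}, and the device that the paper uses to break the circle is precisely what is missing from your proposal.

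The circle is this: you bound the velocity-support radius $R(t)$ by a differential inequality whose right-hand side contains $\|\bu\|_{L^{\infty}_{\bx}}$, then you propose to bound $\|\bu\|_{H^{2}}$ and $\|\bu\|_{L^{2}_{t}D^{2,q}}$ ``in terms of the preceding quantities'' via Stokes maximal regularity. But the Stokes source $\int f(\bv-\bu)\,d\bv=\bj-\rho\bu$ has to land in $L^{2}_{t}(L^{2}_{\bx}\cap L^{q}_{\bx})$; controlling $\rho$ and $\bj$ in $L^{q}_{\bx}$ for $q>3$ from $f\in L^{\infty}\cap L^{1}$ and $\int f|\bv|^{2}\,d\bx d\bv<\infty$ requires the compact velocity support, i.e.\ $R(t)$, which in turn requires $\int_{0}^{t}\|\bu\|_{L^{\infty}}\,ds$, which is exactly what the Stokes bound was supposed to produce. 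Your concluding sentence acknowledges this is ``the heart of the argument'' but does not give a way in.

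The paper's resolution is a two-step bootstrap that you should make explicit. \emph{Step one} is the $\langle\bv\rangle^{6}$-weighted $L^{2}$ energy estimate on the kinetic equation (Lemma~\ref{lm-cs-s-fvwtapriori}): multiplying $\eqref{eq-cs-s}_{1}$ by $2f\langle\bv\rangle^{6}$ and integrating produces a Gronwall inequality for $\big|f\langle\bv\rangle^{3}\big|_{L^{2}}$ whose coefficients depend \emph{only} on $|f|_{L^{\infty}}$, $|f|_{L^{1}}$, the kinetic energy, and $\int_{0}^{T}\!\|\nabla\bu\|_{L^{2}}^{2}\,dt$ --- none of which need $R(t)$; this is where the weight $\langle\bv\rangle^{6}$, not $R(t)$, absorbs the $\bv$-growth. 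From $f\langle\bv\rangle^{3}\in L^{\infty}_{t}L^{2}$ one gets $\rho,\bj\in L^{\infty}_{t}L^{2}$ by Cauchy--Schwarz in $\bv$, hence the $L^{2}$-level Stokes estimate gives $\int_{0}^{T}\|\nabla^{2}\bu\|_{L^{2}}^{2}\,dt$ and therefore, via Gagliardo--Nirenberg, $\int_{0}^{T}\|\bu\|_{L^{\infty}}\,dt$. \emph{Step two}: now that $\int_{0}^{T}\|\bu\|_{L^{\infty}}\,dt$ is under control one can bound $R(t)$, then $\big|f\langle\bv\rangle^{k}\big|_{L^{q}}$ for every $k$, then the Stokes source in $L^{2}_{t}L^{q}_{\bx}$, then $\int_{0}^{T}\|\nabla^{2}\bu\|_{L^{q}}^{2}\,dt$, and finally $\int_{0}^{T}\|\nabla\bu\|_{L^{\infty}}\,dt$ --- which is what closes the $H^{1}_{\omega}$ estimate for $f$. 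Without step one you cannot even start step two. Two minor further points: the differential inequality you wrote for $R_{\bv}(t)$ is not the one the structure gives (the forcing is $\|\mathbf b\|_{L^{\infty}}+\|\bu\|_{L^{\infty}}$, with $\mathbf b$ controlled by mass and energy, and there is no exponential-in-$R$ growth); and the contraction metric you propose for the iterates, $L^{2}_{\omega}$ for the $f$-difference paired with $L^{2}$ for the $\bu$-difference, is awkward to close on the term $\overline{\bu}^{n}\cdot\nabla_{\bv}f^{n}$ because $\overline{\bu}^{n}$ is only in $L^{6}_{\bx}$; the paper sidesteps this by measuring the $f$-difference in $L^{3/2}$ and $L^{1}$ weighted norms.
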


\begin{remark}
In terms of derivation of the kinetic Cucker--Smale model, it is reasonable to postulate boundedness of $\bv$-support of $f_0(\bx,\bv)$, since particle velocities are finite initially. Due to absence of the convection term in the Stokes equations, we do not require any smallness assumptions on the initial data.
\end{remark}

Even though the coupled model \eqref{eq-cs-s} has been studied in \cite{bae2016global}, however the current paper differs from \cite{bae2016global} mainly in two repects. First, our study is set in the whole space, instead of the spatial-periodic domain. We need to introduce some new weighted Sobolev space to overcome the difficulty caused by unboudedness of the domain. Second, the proof in \cite{bae2016global} essentially is based on regularity of weak solutions to the Stokes equations, while our proof lies in a priori estimates on the coupled system, together with the local existence analysis. The key to the proof is to obtain a priori estimate on $\int_0^T|\bu(t)|_{W^{1,\infty}}dt$ for all $0<T<\infty$. Using space-time estimates for the Stokes equations, cf. Proposition \ref{prop-s} in Sect. \ref{sec-preli}, and the Sobolev inequality, we can transform the estimate on $\int_0^T|\bu(t)|_{W^{1,\infty}}dt$ into estimates on $\rho(t,\bx):=\int_{\bbr^3}f(t,\bx,\bv)d\bv$ and $\bj(t,\bx):=\int_{\bbr^3}f(t,\bx,\bv)\bv d\bv$ in $L^{\infty}(0,T;L^q(\bbr^3)), 3<q\le6$. However, it is impossible to obtain estimates on $\rho(t,\bx)$ and $\bj(t,\bx)$ in $L^{\infty}(0,T;L^p(\bbr^3))$ for $p\ge2$, employing the traditional interpolation method. We circumvent this difficulty by means of the following strategy. Split the estimate on $\int_0^T|\bu(t)|_{W^{1,\infty}}dt$ into two steps. We first estimate $\int_0^T|\bu(t)|_{L^{\infty}}dt$. If this step is done, then we can obtain estimates on $f\langle \bv \rangle^k$ in $L^{\infty}(0,T;L^p(\bbr^3\times\bbr^3))$ for all $p, k \in (1,\infty)$, where $\langle \bv \rangle:=(1+\bv^2)^{\frac12}$. This yields estimates on $\rho(t,\bx)$ and $\bj(t,\bx)$ in $L^{\infty}(0,T;L^q(\bbr^3)), 3<q\le6$, using H\"older's inequality.
In order to obtain the estimate on $\int_0^T|\bu(t)|_{L^{\infty}}dt$, we still use the space-time estimates for the Stokes equations and the Sobolev inequality to transform this estimate into estimates on $\rho(t,\bx)$ and $\bj(t,\bx)$ in $L^{\infty}(0,T;L^2(\bbr^3))$. It is sufficient to estimate $f\langle \bv \rangle^3$ in $L^{\infty}(0,T;L^2(\bbr^3\times\bbr^3))$ to obtain these two estimates. Fortunately, the estimate on $f\langle \bv \rangle^3$ in $L^{\infty}(0,T;L^2(\bbr^3\times\bbr^3))$ can be achieved, by means of the $\langle \bv \rangle^6$-weighted energy estimate on $\eqref{eq-cs-s}_1$. With the estimate on $\int_0^T|\bu(t)|_{L^{\infty}}dt$ at hand, we further obtain the estimate on $\int_0^T|\bu(t)|_{W^{1,\infty}}dt$ by a bootstrap argument. The analysis in this section is completely new. Using the idea developed in this paper, it is an interesting problem to extend our result to the coupled model with the incompressible Navier--Stokes equations, under suitable conditions on initial data.

The rest of the paper is organized as follows. In Sect. \ref{sec-preli}, we present some preliminary results used in the subsequent analysis. In Sect. \ref{set-loc-exist}, we construct local-in-time strong solutions to the coupled model by iteration. In Sect. \ref{sec-apriori}, we derive some a priori estimates on the coupled model. Sect. \ref{sec-glob-exst} is devoted to the proof of our theorem.
\vskip 0.3cm
\noindent\textbf{Notation}. Throughout the paper, $C$ represents a general positive constant that may depend on $\varphi$, $\varphi'$, and the initial data. We write $C(\star)$ to emphasize that $C$ depends on $\star$. Both $C$ and $C(\star)$ may differ from line to line. The domain of a function norm is the whole space by default, for example, $|\bu(t,\cdot)|_{D^2}$ is short for $|\bu(t,\cdot)|_{D^2(\bbr^3)}$.

%%%%%%%%%%%%%%%%%%%%%%%%%%%%%%%%%%%%%%%%%%%%%%%%%%%%%%%%%%%%%%%%%%%%%%%%%%%%%%%%%%%%%%%%%%%%%%%%%%%%%%%%%%%%%%%
%
%                     Sect. 2  Preliminary
%
%%%%%%%%%%%%%%%%%%%%%%%%%%%%%%%%%%%%%%%%%%%%%%%%%%%%%%%%%%%%%%%%%%%%%%%%%%%%%%%%%%%%%%%%%%%%%%%%%%%%%%%%%%%%%%%
\section{Preliminary}\label{sec-preli}
\setcounter{equation}{0}
\subsection{The kinetic Cucker--Smale model}
When the number of particles is sufficiently large, it is not convenient to track dynamics of each particle using the ODEs model. Following the strategy from statistical physics, the kinetic Cucker--Smale model can be derived, by taking the mean-field limit to the particle Cucker--Smale model. Incorporating influences of surrounding media, the alignment term $fL[f]$ should be replaced by $fL[f]+f(\bu-\bv)$. Under some assumptions on the fluid velocity $\bu$, Jin \cite{jin2019local} recently provide a detailed analysis on $\eqref{eq-cs-s}_1$ in the weighted Sobolev space. Consider
\begin{equation} \label{eq-kine-cs}
     \begin{dcases}
         f_t + \bv \cdot \nabla_{\bx} f+ \nabla_{\bv} \cdot (L[f]f+(\bu-\bv)f)=0,\\
         f|_{t=0}=f_0(\bx,\bv),
     \end{dcases}
\end{equation}
for given $\bu(t,\bx)\in C([0,T];D^1\cap D^{2}(\bbr^3))\cap L^2(0,T;D^{2,q}(\bbr^3))$, $3<q\le 6$.
Define the bound of $\bv$-support of $f(t,\bx, \bv)$ at the time $t$ as
\[
  R(t):=\sup \Big\{|\bv|: \ (\bx,\bv) \in \text{supp} f(t,\cdot,\cdot)\Big\}.
\]
and
\[
 a(t,\bx):=\int_{\bbr^{6}} \varphi(|\bx-\by|)f(t, \by,\bv^*) d \by d \bv^*,
\]
\[
 \mathbf{b} (t,\bx):=\int_{\bbr^{6}} \varphi(|\bx-\by|)f(t, \by,\bv^*) \bv^* d \by d \bv^*.
\]
The following result is taken from  Proposition 2.1 in \cite{jin2019local}.
\begin{proposition}\label{prop-kine-cs-wp}
Let $0<R_0, T<\infty$. Assume $f_0(\bx,\bv) \ge 0$, $f_0(\bx,\bv) \in H_{\omega}^1(\bbr^3 \times \bbr^3)$, and $\text{supp}_{\bv}f_0(\bx,\cdot)\subseteq B(R_0)$ for all $\bx \in \bbr^3$. Given $\bu(t,\bx)\in C([0,T];D^1\cap D^{2}(\bbr^3))\cap L^2(0,T;D^{2,q}(\bbr^3))$, $3<q\le 6$, there exists a unique non-negative strong solution $f(t,\bx,\bv)\in C([0,T];H_{\omega}^1(\bbr^3 \times \bbr^3))$ to \eqref{eq-kine-cs}. Moreover,
\[
 \begin{aligned}
   (i)& \ R(t)\le R_0+\int_0^t(|\mathbf{b}(\tau)|_{L^{\infty}}+|\bu(\tau)|_{L^{\infty}})d\tau,\quad 0\le t \le T ;\\
   (ii)& \ |f(t)|_{H_{\omega}^1}\le |f_0|_{H_{\omega}^1}\exp \left( C \int_0^t \Big(1+R(\tau)+|\bu(\tau)|_{W^{1,\infty}} \Big) d \tau \right), \quad 0\le t \le T,
 \end{aligned}
\]
where $C:=C(\varphi, f_0)$.
\end{proposition}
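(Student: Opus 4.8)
The plan is to handle the nonlocal alignment operator by a Picard iteration and to solve each resulting linear kinetic transport equation by the method of characteristics, closing every bound with weighted energy estimates. Writing $L[g](t,\bx,\bv)=\mathbf{b}[g](t,\bx)-a[g](t,\bx)\bv$, observe that $L[g]$ is affine in $\bv$ and that $a[g]$, $\mathbf{b}[g]$, together with their $\bx$-derivatives, are controlled solely by $|f_0|_{L^1}$ (mass being conserved along the flow) and by the velocity-support bound of $g$, since $\max\{|\varphi|,|\varphi'|\}\le1$. Given $g$ with the expected bounds, I would solve the linear problem
\[
 f_t+\bv\cdot\nabla_{\bx}f+(L[g]+\bu-\bv)\cdot\nabla_{\bv}f-3(a[g]+1)f=0,\qquad f|_{t=0}=f_0,
\]
through the characteristic system $\dot{\bx}=\bv$, $\dot{\bv}=\mathbf{b}[g]+\bu-(a[g]+1)\bv$; the vector field is Lipschitz in $(\bx,\bv)$ for a.e.\ $t$ with time-integrable Lipschitz constant, using $\bu\in L^1(0,T;W^{1,\infty}(\bbr^3))$, which follows from $\bu\in C([0,T];D^1\cap D^2)\cap L^2(0,T;D^{2,q})$, $q>3$, by Sobolev embedding. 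Hence the flow, and with it $f$, are well defined.

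From the characteristic representation $f(t,X(t),V(t))=f_0\exp\!\big(3\int_0^t(a[g]+1)\,d\tau\big)$ one reads off non-negativity of $f$, conservation of $|f(t)|_{L^1}$ (the exponential growth being exactly compensated by volume contraction in $\bv$), and an $L^\infty$ bound since $0\le a[g]\le|f_0|_{L^1}$. Because $a[g]\ge0$, the $\bv$-characteristics satisfy $\frac{d}{dt}|\bv|\le|\mathbf{b}[g](t)|_{L^\infty}+|\bu(t)|_{L^\infty}$, which propagates the $\bv$-support and will yield (i) once the iteration is made self-consistent. For (ii), I would differentiate the equation in $\bx_i$ and $\bv_j$ and run an $\omega$-weighted $L^2$ estimate for $f$, $\nabla_{\bx}f$, $\nabla_{\bv}f$ simultaneously. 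The delicate contributions are: the streaming term, where $\int 2\omega f\,\bv\cdot\nabla_{\bx}f=-\int(\bv\cdot\nabla_{\bx}\omega)f^2$ with $|\bv\cdot\nabla_{\bx}\omega|\le3\gamma\,\omega$ thanks to $|\bv\cdot\bx|\le\tfrac12(\bx^2+\bv^2)$; the friction and alignment terms, where the divergence part yields the favourable $-3\int(a[g]+1)\omega f^2\le0$ while the remaining pieces are absorbed into $C(1+R(t)+|\bu(t)|_{L^\infty})\int\omega f^2$ using $|\nabla_{\bv}\omega|+\bv\cdot\nabla_{\bv}\omega\le C\omega$; and the commutator source $\partial_{\bx_j}f$ produced by $\partial_{\bv_j}(\bv\cdot\nabla_{\bx}f)$, handled by summing over all first-order derivatives. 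The source terms $\partial_{\bx_i}\bu\cdot\nabla_{\bv}f$ and $(\partial_{\bx_i}L[g])\cdot\nabla_{\bv}f$ are dominated by $C(1+R(t)+|\bu(t)|_{W^{1,\infty}})|\nabla_{\bv}f|$. Adding up gives $\frac{d}{dt}|f(t)|_{H_{\omega}^1}^2\le C(1+R(t)+|\bu(t)|_{W^{1,\infty}})|f(t)|_{H_{\omega}^1}^2$, and Grönwall's inequality yields (ii).

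With the linear theory in place I would iterate: $f^{0}:=f_0$ and $f^{n+1}:=$ the solution of the linearized problem with $g=f^n$. The estimates above are uniform in $n$ on all of $[0,T]$, because the support bounds close via $\dot R^{n+1}\le|f_0|_{L^1}R^n+|\bu|_{L^\infty}$, which admits a common bound on $[0,T]$ by a Grönwall comparison, and then the $L^\infty$ and $H_{\omega}^1$ bounds follow. For convergence, $f^{n+1}-f^n$ solves a transport equation whose source is, up to lower-order terms, $(L[f^n]-L[f^{n-1}])\cdot\nabla_{\bv}f^n$, and $|L[f^n]-L[f^{n-1}]|$ is controlled by a weighted norm of $f^n-f^{n-1}$; an $L_{\omega}^2$ energy estimate with the uniform bounds then gives a contraction on a short interval, iterated to $[0,T]$. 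Passing to the limit with weak-$*$ compactness in $L^\infty(0,T;H_{\omega}^1)$, strong convergence in $C([0,T];L_{\omega}^2)$, and time-continuity recovered from the equation produces a non-negative strong solution $f\in C([0,T];H_{\omega}^1)$ obeying (i)--(ii); uniqueness follows from the same difference estimate applied to two solutions. I expect the main obstacle to be the weighted $H_{\omega}^1$ estimate of the previous paragraph: one must verify that every term generated by testing the $(\bx,\bv)$-differentiated equation against $\omega=\langle\bv\rangle^{2(2\alpha+1)}(1+\bx^2+\bv^2)^{3\gamma}$ is absorbed into $C(1+R+|\bu|_{W^{1,\infty}})|f|_{H_{\omega}^1}^2$, the compact $\bv$-support being what lets $|\bv|$ be replaced by $R(t)$ wherever needed, so that only $|\bu|_{L^\infty}$ and $|\bu|_{W^{1,\infty}}$---and no velocity moments of $\bu$---enter the final bound.
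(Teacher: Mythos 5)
Your proposal is correct in outline and follows the standard route: note that this paper does not actually prove Proposition \ref{prop-kine-cs-wp} but imports it verbatim from Proposition 2.1 of \cite{jin2019local}, and your scheme --- linearizing $L[\cdot]$, solving by characteristics, reading off positivity, mass conservation and the support bound from the flow, closing (ii) with an $\omega$-weighted $H^1$ energy estimate, and then iterating with a contraction in a lower-order weighted norm --- is exactly the strategy of that reference (and of the analogous iteration in Sect.~\ref{set-loc-exist} here). The individual ingredients you flag (the cancellation $|\bv\cdot\nabla_{\bx}\omega|\le 3\gamma\,\omega$, the favourable sign of $-3(a+1)$, the use of $q>3$ to get $\bu\in L^1(0,T;W^{1,\infty})$ by Gagliardo--Nirenberg, and the replacement of $|\bv|$ by $R(t)$ on the support) are all the right ones and are consistent with the constant $C(\varphi,f_0)$ in the statement.
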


\subsection{Linear non-stationary Stokes equations}
The fluid can be well approximated by a Stokes flow, when the velocity is very slow or the viscosity is very large. Given $\mathbf{g}(t,\bx)\in L^2(0,T;L^2\cap L^q(\bbr^3)), 3<q\le6$, consider the initial value problem to the following linear non-stationary Stokes equations.
\begin{equation} \label{eq-s}
     \begin{dcases}
         \bu_t+\nabla P=\Delta \bu +\mathbf g, \\
         \nabla \cdot \bu=0,\\
         \bu|_{t=0}=\bu_0,
     \end{dcases}
\end{equation}
with $\bu_0$ satisfying the compatibility condition $\nabla \cdot \bu_0=0$. The following result is summarized from Theorem 2.8 in \cite{giga1991abstract}, Theorem 1.5.2 and Lemma 1.6.2 in \cite{sohr2012navier}. It will be used in the construction of approximate solutions to \eqref{eq-cs-s}.
\begin{proposition}\label{prop-s}
Given $\mathbf{g}(t,\bx)\in L^2(0,T;L^2\cap L^q(\bbr^3)), 3<q\le6$, assume $\bu_0(\bx)\in H^2(\bbr^3)$. The Cauchy problem to the linear non-stationary Stokes equations \eqref{eq-s} admits a unique strong solution$(\bu,\nabla P)$ satisfying
\[
 \begin{aligned}
  &\bu(t,\bx)\in C([0,T];H^{2}(\bbr^3))\cap L^2(0,T;D^{2,q}(\bbr^3)),\\
  &\bu_t(t,\bx)\in C([0,T];L^{2}(\bbr^3))\cap L^2(0,T;L^{q}(\bbr^3)),\\
  &\nabla P(t,\bx)\in C([0,T];L^{2}(\bbr^3))\cap L^2(0,T;L^{q}(\bbr^3)),
 \end{aligned}
\]
for all $0<T\le \infty$. Moreover, there exists some constant $C$, independent of $T$, such that
\[
 |\bu_t|_{L^2(0,T;L^p)}+|\nabla^2\bu|_{L^2(0,T;L^p)}+|\nabla P|_{L^2(0,T;L^p)}\le C\Big(|\bu_0|_{H^2}+|\mathbf g|_{L^2(0,T;L^p)}\Big)
\]
for all $2\le p \le q$.
\end{proposition}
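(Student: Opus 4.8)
The statement is in essence a reformulation of the classical $L^r$--$L^p$ theory for the non-stationary Stokes system on $\bbr^3$, and the plan is to assemble it from three ingredients: the Helmholtz projection, the analytic Stokes semigroup, and $T$-independent $L^2$-in-time maximal regularity. First I would apply the Helmholtz--Leray projection $\bbp$, bounded on $L^p(\bbr^3)$ for $1<p<\infty$, to eliminate the pressure. Writing $A:=-\bbp\Delta$ for the Stokes operator on $L^p_\sigma(\bbr^3)$ (the solenoidal subspace of $L^p$), the system \eqref{eq-s} becomes the abstract Cauchy problem $\bu_t+A\bu=\bbp\mathbf g$, $\bu|_{t=0}=\bu_0$, solved by the Duhamel formula $\bu(t)=e^{-tA}\bu_0+\int_0^t e^{-(t-s)A}\bbp\mathbf g(s)\,ds$, where $\{e^{-tA}\}$ is the bounded analytic $C_0$-semigroup generated by $-A$ on each $L^p_\sigma(\bbr^3)$. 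The pressure is then recovered algebraically: decomposing both sides of $\bu_t+\nabla P=\Delta\bu+\mathbf g$ into solenoidal and gradient parts and using that $\bu_t$ and $\Delta\bu$ are solenoidal (because $\nabla\cdot\bu\equiv0$) gives $\nabla P=(I-\bbp)\mathbf g$, so in particular $|\nabla P|_{L^2(0,T;L^p)}\le C|\mathbf g|_{L^2(0,T;L^p)}$.

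The heart of the estimate is the $L^2$-in-time maximal regularity of $A$ with a constant independent of $T$: for $\mathbf h\in L^2(0,T;L^p_\sigma)$ the solution $v$ of $v_t+Av=\mathbf h$, $v|_{t=0}=0$, obeys $|v_t|_{L^2(0,T;L^p)}+|Av|_{L^2(0,T;L^p)}\le C(p)|\mathbf h|_{L^2(0,T;L^p)}$. For $p=2$ this is the Hilbert-space maximal regularity (Plancherel in time, $A$ being nonnegative self-adjoint on $L^2_\sigma$); for $1<p<\infty$ it is the Giga--Sohr theorem, whose $T$-independence on the unbounded domain $\bbr^3$ stems from the $\mathcal R$-sectoriality, equivalently the bounded $H^\infty$-calculus (bounded imaginary powers), of the Stokes operator uniformly in the angle. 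Applying this with $\mathbf h=\bbp\mathbf g$, and $|\bbp\mathbf g|_{L^p}\le C|\mathbf g|_{L^p}$, controls the Duhamel integral. For the semigroup term one writes $\partial_t e^{-tA}\bu_0=-A^{1/2}e^{-tA}(A^{1/2}\bu_0)$ and invokes the square-function bound $\int_0^\infty|A^{1/2}e^{-tA}w|_{L^p}^2\,dt\le C|w|_{L^p}^2$ (again a consequence of the bounded $H^\infty$-calculus, or of the trace characterization $D(A_p^{1/2})=(L^p_\sigma,D(A_p))_{1/2,2}$), obtaining $|\partial_t e^{-tA}\bu_0|_{L^2(0,\infty;L^p)}\le C|A_p^{1/2}\bu_0|_{L^p}\le C|\nabla\bu_0|_{L^p}$, and likewise $|\nabla^2 e^{-tA}\bu_0|_{L^2(0,\infty;L^p)}\le C|\nabla\bu_0|_{L^p}$ using $|\nabla^2 v|_{L^p}\le C|Av|_{L^p}$ on $\bbr^3$. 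Here the restriction $q\le6$ enters: in three dimensions $H^2(\bbr^3)\hookrightarrow W^{1,p}(\bbr^3)$ for every $2\le p\le6$ (since $\nabla\bu_0\in H^1\hookrightarrow L^2\cap L^6$ and $\bu_0\in L^2\cap L^\infty$), so $|\nabla\bu_0|_{L^p}\le C|\bu_0|_{H^2}$ throughout the required range $2\le p\le q$ and $\bu_0\in D(A_p^{1/2})$. Combining the two contributions, together with $|\nabla^2\bu|_{L^p}\le C|A\bu|_{L^p}$, yields the asserted inequality with $C$ independent of $T$.

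It then remains to verify the regularity classes and the continuity in time and to prove uniqueness. Taking $p=q$ in the estimate gives $\bu\in L^2(0,T;D^{2,q}(\bbr^3))$, $\bu_t\in L^2(0,T;L^q)$, $\nabla P\in L^2(0,T;L^q)$; taking $p=2$ gives $\nabla^2\bu,\bu_t,\nabla P\in L^2(0,T;L^2)$. Since $\bu_0\in H^2(\bbr^3)$ is solenoidal, $\bu_0\in D(A_2)$, so $e^{-tA}\bu_0\in C([0,\infty);D(A_2))=C([0,\infty);H^2(\bbr^3))$ by standard $C_0$-semigroup theory; the time-continuity of the Duhamel term into $H^2$, of $\bu_t$ into $L^2$, and of $\nabla P$ into $L^2$ then follows from the parabolic trace and interpolation lemmas cited from \cite{sohr2012navier} (with a density argument in $\mathbf g$), and the $L^q$-continuities similarly from the $p=q$ estimates. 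Uniqueness is the energy estimate: the difference of two solutions has zero initial data and, by the regularity just obtained, may be tested against itself in the momentum equation, giving $\tfrac12\tfrac{d}{dt}|\bu|_{L^2}^2+|\nabla\bu|_{L^2}^2=0$ (the pressure term vanishing after integration by parts since $\nabla\cdot\bu=0$), hence $\bu\equiv0$ and then $\nabla P\equiv0$.

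I expect the main obstacle to be the $T$-independent $L^p$ maximal regularity on the unbounded domain $\bbr^3$ over the whole range $2\le p\le q$; this is precisely the deep content invoked from \cite{giga1991abstract} and rests on the $\mathcal R$-boundedness / bounded imaginary powers of the Stokes operator on the whole space. A secondary technical point is matching the low-regularity datum $\bu_0\in H^2(\bbr^3)$ with the fractional-power domain $D(A_p^{1/2})$ for $p$ up to $q$, which — as noted above — is handled by the Sobolev embedding $H^2(\bbr^3)\hookrightarrow W^{1,q}(\bbr^3)$ and is exactly the reason $q$ is capped at $6$.
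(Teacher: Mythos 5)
The paper offers no proof of this proposition at all---it is quoted verbatim as a summary of Theorem 2.8 in \cite{giga1991abstract} and Theorem 1.5.2, Lemma 1.6.2 in \cite{sohr2012navier}---and your outline is a faithful reconstruction of exactly the argument behind those citations: Helmholtz projection, the Stokes (= heat, on $\bbr^3$) semigroup, $T$-independent mixed $L^2_tL^p_x$ maximal regularity via bounded imaginary powers, the square-function/trace bound for the $e^{-tA}\bu_0$ term, and the observation that $q\le 6$ is what makes $H^2(\bbr^3)\hookrightarrow W^{1,q}(\bbr^3)$ and hence $\bu_0\in D(A_q^{1/2})$. The one place your sketch (unavoidably) waves its hands is the time-continuity block: since $\nabla P=(I-\bbp)\mathbf g$ and $\bu_t=\Delta\bu+\bbp\mathbf g$, the claims $\bu_t,\nabla P\in C([0,T];L^2)$ cannot follow from $\mathbf g\in L^2(0,T;L^2\cap L^q)$ alone and no density argument repairs this; that is an over-statement in the proposition itself rather than a flaw in your approach, and it is harmless in the paper because the forcing $\int_{\bbr^3}f(\bv-\bu)\,d\bv$ actually used there is continuous in time with values in $L^2$ (the paper re-derives $\bu_t\in C([0,T_0];L^2)$ separately in Section 3).
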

\subsection{The Helmholtz decomposition in $\bbr^3$}
It is well-known that any smooth vector field in $\bbr^3$ that falls off sufficiently fast at large distances can be uniquely decomposed as the sum of a divergence-free part and a gradient part. Denote by $L_{\sigma}^p(\bbr^3)$ the completion of $C_{0,\sigma}^{\infty}(\bbr^3):=\bigg\{\boldsymbol{\psi}(\bx):\ \boldsymbol{\psi}(\bx)\in C_{0}^{\infty}(\bbr^3), \nabla \cdot\boldsymbol{\psi}=0 \bigg\}$ in $L^p(\bbr^3)$, and
\[
 G^p(\bbr^3):=\bigg\{\mathbf h \in L^p(\bbr^3):\ \mathbf h=\nabla H \ \text{for some $H\in L_{loc}^p(\bbr^3)$}\bigg\}.
\]
Any vector field $\mathbf U(\bx)$ in $L^p(\bbr^3), 1<p<\infty$, can be uniquely decomposed as
\[
 \mathbf U(\bx)=\mathbf U_1(\bx)+\mathbf U_2(\bx), \quad \text{where $\mathbf U_1(\bx) \in L_{\sigma}^p(\bbr^3)$, and $\mathbf U_2(\bx) \in G^p(\bbr^3)$}.
\]
This decomposition is referred to as the Helmholtz decomposition. The corresponding Helmholtz projection $\mathcal P: L^p(\bbr^3) \mapsto L_{\sigma}^p(\bbr^3)$ is a bounded linear operator in $L^p(\bbr^3), 1<p<\infty$. This decomposition is widely used in fluid mechanics. We often project fluid equations on the space of divergence-free vector fields, to eliminate $\nabla P$. The Helmholtz decomposition in $\bbr^3$, cf. Remark III.1.1 and Theorem III.1.2 in \cite{galdi2011introduction}, is summarized as follows.
\begin{lemma}[The Helmholtz decomposition]\label{lem-heim-deco}
Given any vector field $\mathbf U(\bx)$ in $L^p(\bbr^3), 1<p<\infty$, there exists a unique $\Big(\mathbf U_1(\bx),\mathbf U_2(\bx)\Big)$ such that
\[
 \mathbf U(\bx)=\mathbf U_1(\bx)+\mathbf U_2(\bx), \quad \text{where $\mathbf U_1(\bx) \in L_{\sigma}^p(\bbr^3)$, and $\mathbf U_2(\bx) \in G^p(\bbr^3)$}.
\]
Moreover, $|\mathcal P\mathbf U |_{L^p}\le C|\mathbf U |_{L^p}$.
\end{lemma}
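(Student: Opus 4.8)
The plan is to construct the decomposition explicitly via Riesz transforms and then verify each assertion separately. Write $R_j=\partial_j(-\Delta)^{-1/2}$ for the $j$-th Riesz transform on $\bbr^3$, with Fourier multiplier $-i\xi_j/|\xi|$, so that $R_jR_k$ has multiplier $-\xi_j\xi_k/|\xi|^2$. Given $\mathbf U=(U_1,U_2,U_3)\in L^p(\bbr^3)$, $1<p<\infty$, define the candidate gradient part by $(\mathbf U_2)_j:=-\sum_{k=1}^3 R_jR_k U_k$, equivalently $\widehat{\mathbf U_2}(\xi)=\xi\,\big(\xi\cdot\widehat{\mathbf U}(\xi)\big)/|\xi|^2$, set $\mathbf U_1:=\mathbf U-\mathbf U_2$, and let $\mathcal P$ be the operator $\mathbf U\mapsto\mathbf U_1$. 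By classical Calder\'on--Zygmund theory each composition $R_jR_k$ is bounded on $L^p(\bbr^3)$ for every $1<p<\infty$; hence $\mathbf U_2$ and $\mathbf U_1=\mathcal P\mathbf U$ both lie in $L^p(\bbr^3)$, and $|\mathcal P\mathbf U|_{L^p}\le C|\mathbf U|_{L^p}$ with $C$ independent of $\mathbf U$, which already yields the ``moreover'' assertion.

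Next I would verify the two membership statements. On the Fourier side, $\xi\cdot\widehat{\mathbf U_1}(\xi)=\xi\cdot\widehat{\mathbf U}(\xi)-|\xi|^{-2}|\xi|^{2}\,\xi\cdot\widehat{\mathbf U}(\xi)=0$ and $\xi\wedge\widehat{\mathbf U_2}(\xi)=|\xi|^{-2}\big(\xi\cdot\widehat{\mathbf U}(\xi)\big)(\xi\wedge\xi)=0$, so in the sense of distributions $\nabla\cdot\mathbf U_1=0$ and $\nabla\times\mathbf U_2=0$. Since $\bbr^3$ is simply connected, $\nabla\times\mathbf U_2=0$ together with $\mathbf U_2\in L^p$ gives $\mathbf U_2=\nabla H$ for some distribution $H$; the homogeneous bound $|\nabla H|_{L^p}=|\mathbf U_2|_{L^p}$ combined with $H$ being a tempered distribution of polynomial growth lets one normalize $H\in L_{loc}^p(\bbr^3)$, so $\mathbf U_2\in G^p(\bbr^3)$. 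For $\mathbf U_1$ it remains to show that a divergence-free field in $L^p(\bbr^3)$ belongs to the $L^p$-closure of $C_{0,\sigma}^\infty(\bbr^3)$; I would prove this identification by mollifying $\mathbf U_1$ to a smooth divergence-free field, multiplying by a cutoff $\chi(\cdot/R)$, and removing the divergence error $\mathbf U_1\cdot\nabla\chi(\cdot/R)$, which is supported in an annulus, by a Bogovskii-type operator bounded on $L^p$; letting the mollification parameter tend to $0$ and $R\to\infty$ produces the desired approximants, whence $\mathbf U_1\in L_\sigma^p(\bbr^3)$.

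Finally I would handle uniqueness, which reduces to $L_\sigma^p(\bbr^3)\cap G^p(\bbr^3)=\{0\}$: if $\mathbf w=\nabla H\in L^p$ is divergence-free then $\Delta H=0$, so $H$ is harmonic, and each component $\partial_jH$ is then harmonic and lies in $L^p(\bbr^3)$, which by the mean-value property forces $\partial_jH\equiv 0$. Hence two admissible decompositions $\mathbf U_1+\mathbf U_2=\mathbf U_1'+\mathbf U_2'$ obey $\mathbf U_1-\mathbf U_1'=\mathbf U_2'-\mathbf U_2\in L_\sigma^p\cap G^p=\{0\}$, giving uniqueness of $(\mathbf U_1,\mathbf U_2)$.

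The main obstacle is not the Fourier algebra but the two genuinely $L^p$ harmonic-analytic inputs for $p\ne2$: the Calder\'on--Zygmund $L^p$-bounds for $R_jR_k$ (which replace the trivial orthogonal projection available at $p=2$) and the identification $L_\sigma^p(\bbr^3)=\{\mathbf w\in L^p(\bbr^3):\nabla\cdot\mathbf w=0\}$, whose nontrivial inclusion needs the cutoff-plus-Bogovskii approximation above. Both are standard --- they are precisely the content of Remark III.1.1 and Theorem III.1.2 in \cite{galdi2011introduction} --- so in the paper I would simply quote them, the remaining verification steps (the divergence and curl computations, the Liouville argument, and the norm bound) being routine.
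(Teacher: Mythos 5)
Your proposal is correct, but it is worth noting that the paper does not prove this lemma at all: it is stated as a summary of Remark III.1.1 and Theorem III.1.2 in \cite{galdi2011introduction} and used as a black box. What you have written is an actual (standard) proof of the cited result. The construction via $(\mathbf U_2)_j=-\sum_k R_jR_kU_k$ is the right one, the Fourier-side verifications of $\nabla\cdot\mathbf U_1=0$ and $\nabla\times\mathbf U_2=0$ are routine, and the Liouville argument for uniqueness ($\nabla H$ harmonic and in $L^p(\bbr^3)$ forces $\nabla H\equiv 0$ by the mean-value property) is exactly how $L^p_\sigma\cap G^p=\{0\}$ is established. You correctly isolate the two genuinely nontrivial $L^p$ inputs: the Calder\'on--Zygmund bounds for $R_jR_k$, and the characterization $L^p_\sigma(\bbr^3)=\{\mathbf w\in L^p:\nabla\cdot\mathbf w=0\}$, whose hard inclusion needs the cutoff-plus-Bogovskii approximation. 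For the latter, the one detail worth recording if you were to write it out is that the Bogovskii operator applies to the divergence error $\mathbf U_1\cdot\nabla\chi(\cdot/R)$ only because this function has zero mean over the annulus (which follows from $\nabla\cdot\mathbf U_1=0$), and that the scaling of the Bogovskii norm on the annulus of radius $R$ exactly cancels the factor $R^{-1}$ from $\nabla\chi(\cdot/R)$, leaving a quantity controlled by $|\mathbf U_1|_{L^p}$ on the annulus, which tends to zero. Since both inputs are precisely the content of the results the paper cites, quoting them, as you propose and as the paper does, is the appropriate level of detail here; your sketch simply makes explicit what the citation hides.
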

%%%%%%%%%%%%%%%%%%%%%%%%%%%%%%%%%%%%%%%%%%%%%%%%%%%%%%%%%%%%%%%%%%%%%%%%%%%%%%%%%%%%%%%%%%%%%%%%%%%%%%%%%%%%%%%%
%
%                   Sect.3 Local Existence of Strong Solutions to the Coupled System
%
%%%%%%%%%%%%%%%%%%%%%%%%%%%%%%%%%%%%%%%%%%%%%%%%%%%%%%%%%%%%%%%%%%%%%%%%%%%%%%%%%%%%%%%%%%%%%%%%%%%%%%%%%%%%%%%%
\section{Local Existence of Strong Solutions to the Coupled System}\label{set-loc-exist}
\setcounter{equation}{0}
In this section, we establish the local existence of strong solutions to the coupled system \eqref{eq-cs-s}-\eqref{eq-sys-inidata}. Our strategy is as follows. We first linearize the system and construct the approximate solutions by iteration. It is shown that there exists some $T_*>0$, depending only on the initial data and the model parameter, such that the approximate solutions are uniformly bounded in $[0, T_*]$. Then we prove that the approximate solution sequence is convergent in some lower-order regularity function spaces, and further show that the limit is the desired local strong solution. The result in this section is summarized as follows.
\begin{proposition} \label{prop-loc-exist}
Let $0<R_0<\infty, 3<q\le 6$. Assume the initial data $f_0(\bx,\bv)\ge 0$, $f_0(\bx,\bv) \in H_{\omega}^1(\bbr^3 \times \bbr^3)$, and $\bu_0(\bx) \in H^{2}(\bbr^3)$,
with the $\bv$-support of $f_0(\bx,\bv)$ satisfying
\[
 \text{supp}_{\bv}f_0(\bx,\cdot)\subseteq B(R_0) \quad \text{for all $\bx \in \bbr^3$}.
\]
Then there exists some $T_0>0$, depending only on the initial data and the model parameter, such that the Cauchy problem \eqref{eq-cs-s}-\eqref{eq-sys-inidata} admits a unique strong solution in $[0,T_0]$, satisfying
\[
\begin{aligned}
  &f(t,\bx,\bv)\in C([0,T_0];H_{\omega}^1(\bbr^3 \times \bbr^3)),\\
  &\bu(t,\bx)\in C([0,T_0];H^{2}(\bbr^3))\cap L^2(0,T_0;D^{2,q}(\bbr^3)),\\
  &\bu_t(t,\bx)\in C([0,T_0];L^{2}(\bbr^3))\cap L^2(0,T_0;L^{q}(\bbr^3)),\\
  &\nabla P(t,\bx)\in C([0,T_0];L^{2}(\bbr^3))\cap L^2(0,T_0;L^{q}(\bbr^3)).
\end{aligned}
\]
\end{proposition}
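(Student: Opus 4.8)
The plan is to produce the solution as the limit of a linearized Picard iteration, using Proposition~\ref{prop-kine-cs-wp} to advance the kinetic equation and Proposition~\ref{prop-s} to advance the Stokes system at each step. Let $\bu^{0}$ be the solution of \eqref{eq-s} with source $\mathbf{0}$ and initial datum $\bu_{0}$, which lies in the class of Definition~\ref{def-stro} by Proposition~\ref{prop-s}. Given $\bu^{n}$ in that class on an interval $[0,T_{0}]$, I let $f^{n+1}\in C([0,T_{0}];H_{\omega}^{1})$ be the nonnegative solution of \eqref{eq-kine-cs} with $\bu=\bu^{n}$ furnished by Proposition~\ref{prop-kine-cs-wp}, put $\rho^{n+1}=\int_{\bbr^{3}}f^{n+1}\,d\bv$, $\bj^{n+1}=\int_{\bbr^{3}}f^{n+1}\bv\,d\bv$ and $\mathbf g^{n+1}=\bj^{n+1}-\rho^{n+1}\bu^{n}$, and let $(\bu^{n+1},\nabla P^{n+1})$ be the solution of \eqref{eq-s} with source $\mathbf g^{n+1}$ furnished by Proposition~\ref{prop-s}, once the membership $\mathbf g^{n+1}\in L^{2}(0,T_{0};L^{2}\cap L^{q})$ has been checked.

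The heart of the argument, and the step I expect to be hardest, is an a priori bound on the iterates that is uniform in $n$ on a common interval $[0,T_{0}]$. Integrating $\eqref{eq-kine-cs}_{1}$ over $\bv$ gives mass conservation $|f^{n+1}(t)|_{L^{1}}=|f_{0}|_{L^{1}}$, and following $f^{n+1}$ along the velocity characteristics together with $|\varphi|\le 1$ gives $|f^{n+1}(t)|_{L^{\infty}}\le |f_{0}|_{L^{\infty}}e^{Ct}$. Since $\bu^{n}\in C([0,T_{0}];H^{2})\hookrightarrow C([0,T_{0}];L^{\infty})$, item $(i)$ of Proposition~\ref{prop-kine-cs-wp} combined with $|\mathbf b^{n+1}(t)|_{L^{\infty}}\le R^{n+1}(t)\,|f_{0}|_{L^{1}}$ and Gronwall's inequality produces a bound $R^{n+1}(t)\le R^{*}$ on $[0,T_{0}]$, with $R^{*}$ depending only on $R_{0}$, the data, $T_{0}$ and $\sup_{[0,T_{0}]}|\bu^{n}|_{H^{2}}$. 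As $f^{n+1}(t,\bx,\cdot)$ is then supported in $B(R^{*})$ and bounded in $L^{1}\cap L^{\infty}$, the moments $\rho^{n+1},\bj^{n+1}$ are bounded in $L^{\infty}(0,T_{0};L^{p})$ for every $p\in[1,\infty]$, while item $(ii)$ of Proposition~\ref{prop-kine-cs-wp} further controls $\nabla_{\bx}\rho^{n+1}$ and $\nabla_{\bx}\bj^{n+1}$ in $L^{\infty}(0,T_{0};L^{2}\cap L^{q})$; hence $\mathbf g^{n+1}$ is bounded in $L^{2}(0,T_{0};L^{2}\cap W^{1,q})$, with a bound of the same dependence. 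Proposition~\ref{prop-s}, supplemented by the associated energy estimates for \eqref{eq-s} that bound $\sup_{[0,T_{0}]}|\bu^{n+1}|_{H^{2}}$, then controls $\bu^{n+1}$ in the full norm of Definition~\ref{def-stro} by $C\big(|\bu_{0}|_{H^{2}}+|\mathbf g^{n+1}|_{L^{2}(0,T_{0};L^{2}\cap W^{1,q})}\big)$. Choosing $M$ large in terms of the data, and then $T_{0}$ small in terms of $M$ and the data, closes the induction: if $|\bu^{n}|_{X_{T_{0}}}\le M$ then $|\bu^{n+1}|_{X_{T_{0}}}\le M$, where $X_{T_{0}}$ denotes the solution space of Definition~\ref{def-stro}, and $f^{n+1}$ stays bounded in $C([0,T_{0}];H_{\omega}^{1})$. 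This bootstrap is delicate precisely because propagating the $\bv$-support of $f^{n+1}$ requires the $L^{\infty}$-norm of $\bu^{n}$ and not merely its energy, and it is this compact support, together with the $L^{\infty}$- and $H_{\omega}^{1}$-bounds on $f^{n+1}$, that makes the moments in $\mathbf g^{n+1}$ fit the source class required by Proposition~\ref{prop-s}.

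For convergence, I set $\delta f^{n}=f^{n+1}-f^{n}$, $\delta\bu^{n}=\bu^{n+1}-\bu^{n}$, and estimate them in norms of one order lower. Subtracting consecutive kinetic equations, $\delta f^{n}$ solves a linear transport equation with zero initial datum whose inhomogeneity is $-\nabla_{\bv}\cdot\big((\delta\bu^{n-1})f^{n}\big)$, the remaining terms being linear in $\delta f^{n}$ with coefficients built from the uniformly bounded $f^{n},f^{n+1},\bu^{n}$ and from $L[f^{n}],L[f^{n+1}],L[\delta f^{n}]$; subtracting consecutive Stokes systems, $\delta\bu^{n}$ solves \eqref{eq-s} with zero initial datum and source $\delta\mathbf g^{n}=\delta\bj^{n}-(\delta\rho^{n})\bu^{n}-\rho^{n}\,\delta\bu^{n-1}$. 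Running the $\omega$-weighted $L^{2}$-energy estimate for $\delta f^{n}$ and the $L^{2}$-based maximal-regularity estimate of Proposition~\ref{prop-s} for $\delta\bu^{n}$, and using that $\delta f^{n}$ has $\bv$-support in $B(R^{*})$, I expect a recursion of the form $\|(\delta f^{n},\delta\bu^{n})\|_{*}\le C(M)\,T_{0}^{1/2}\,\|(\delta f^{n-1},\delta\bu^{n-1})\|_{*}$ for a norm $\|\cdot\|_{*}$ controlling $f$ in $C([0,T_{0}];L_{\omega}^{2})$ and $\bu$ in $C([0,T_{0}];H^{1})\cap L^{2}(0,T_{0};H^{2})$. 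It is here that the weight $\omega$ earns its form: pairing the nonlocal term $L[\delta f^{n}]$ and the velocity moments in $\delta\mathbf g^{n}$ against $\omega$ via Cauchy--Schwarz returns $L_{\omega}^{2}$-norms of the differences, the spatial factor $(1+\bx^{2}+\bv^{2})^{3\gamma}$ with $\gamma>1$ making $\omega^{-1}$ integrable over $\bbr^{3}_{\bx}\times B(R^{*})_{\bv}$ so that the $\by$-integration in $L$ over all of $\bbr^{3}$ is tamed, while the velocity factor $(1+\bv^{2})^{2\alpha+1}$ absorbs the $\nabla_{\bv}\omega$ error terms. Shrinking $T_{0}$ once more makes $C(M)T_{0}^{1/2}<1$, so the iterates form a Cauchy sequence and converge in $\|\cdot\|_{*}$.

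Finally, the uniform bounds of the second step and the strong convergence of the third yield, by weak-$*$ compactness and lower semicontinuity of the norms, a limit $(f,\bu,\nabla P)$ lying in all the function spaces of Proposition~\ref{prop-loc-exist}, with $f\ge 0$; the strong convergence in the lower-order norms passes the limit through the bilinear terms $L[f]f$, $\rho\bu$ and $f\bv$, so $(f,\bu,\nabla P)$ solves \eqref{eq-cs-s}--\eqref{eq-sys-inidata} in the sense of Definition~\ref{def-stro}, and strong continuity in time in the top norms follows from weak continuity together with continuity of the corresponding energies. Uniqueness I would obtain by running the same difference estimate between two strong solutions sharing the initial data and invoking Gronwall's inequality, which forces them to coincide on $[0,T_{0}]$. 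Combining these steps gives Proposition~\ref{prop-loc-exist}.
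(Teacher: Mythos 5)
Your overall architecture matches the paper's: linearize, iterate with Proposition~\ref{prop-kine-cs-wp} for the kinetic part and Proposition~\ref{prop-s} for the Stokes part, establish uniform bounds on a small interval, derive a contraction in a weaker norm, and pass to the limit. The paper, however, takes the drag term \emph{implicitly} in the Stokes iterate (the source contains $-\rho^{n+1}\bu^{n+1}$ rather than $-\rho^{n+1}\bu^{n}$), which makes that term a nonnegative damping in all energy pairings, whereas your fully explicit source $\mathbf g^{n+1}=\bj^{n+1}-\rho^{n+1}\bu^{n}$ pushes a $\delta\bu^{n-1}$ term into the difference Stokes source in the contraction step; both routes are workable, but the paper's choice slightly simplifies the sign structure. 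You bound moments by propagating the compact $\bv$-support of $f^{n+1}$ and using $L^{1}\cap L^{\infty}$, while the paper derives all moment bounds directly from the weight $\omega$ via H\"older (e.g. $\left|\int f^{n+1}\bv\,d\bv\right|_{L^{6/5}}\le C|f^{n+1}|_{L^{2}_{\omega}}$), never needing the support in this step. Your contraction norm is $L^{2}_{\omega}$ for the kinetic difference; the paper instead works in $\left|\overline f^{n+1}\Lambda\right|_{L^{3/2}}$ and $\left|\overline f^{n+1}\langle\bv\rangle\right|_{L^{1}}$ with the simpler velocity weight $\Lambda=(1+\bv^{2})^{\alpha}$, because $L^{3/2}$ and $L^{6/5}$ moments pair directly with $L^{3}$ and $L^{6}$ of $\overline{\bu}^{n+1}$ via Sobolev embedding; your $L^{2}_{\omega}$ choice will require different H\"older exponents but should close similarly.

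Two points need repair or elaboration. First, your assertion that item (ii) of Proposition~\ref{prop-kine-cs-wp} controls $\nabla_{\bx}\rho^{n+1}$ and $\nabla_{\bx}\bj^{n+1}$ in $L^{\infty}(0,T_{0};L^{q})$ for $q\in(3,6]$ does not follow: from $\nabla_{\bx}f^{n+1}\in L^{2}_{\omega}$ and a compact $\bv$-support one gets $\nabla_{\bx}\rho^{n+1}\in L^{2}$, but not $L^{q}$ for $q>2$ without second-derivative control of $f^{n+1}$. Since Proposition~\ref{prop-s} only requires $\mathbf g\in L^{2}(0,T;L^{2}\cap L^{q})$ and your $L^{1}\cap L^{\infty}$ moment bound already supplies that, the $W^{1,q}$ claim is an over-reach and should be dropped. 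Second, the phrase ``supplemented by the associated energy estimates'' conceals the bulk of the work: the bound $\sup_{t}|\bu^{n+1}_{t}|_{L^{2}}$, and through it $\sup_{t}|\bu^{n+1}|_{D^{2}}$ and $\int_{0}^{T_{0}}|\bu^{n+1}|^{2}_{D^{2,q}}dt$, requires differentiating the Stokes equation in $t$, testing against $\bu^{n+1}_{t}$, and then eliminating $f^{n+1}_{t}$ by substituting the kinetic equation and integrating by parts in $(\bx,\bv)$ — this produces quadratic moments like $\int f^{n+1}\bv^{2}\,d\bv$ which must then be bounded through the weight or the compact support. This is precisely where the paper spends most of its effort, and it cannot be subcontracted to Proposition~\ref{prop-s}'s maximal-regularity estimate, which yields only $L^{2}$-in-time bounds on $\bu_{t}$, $\nabla^{2}\bu$, $\nabla P$.
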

Next we use results in Sect. \ref{sec-preli} to finish the proof of Proposition \ref{prop-loc-exist}.
\vskip 0.3cm
\noindent \textit{Proof of Proposition \ref{prop-loc-exist}}. We first construct approximate solutions by iteration. Given $\bu^n(t, \bx) \in C([0,T];H^{2}(\bbr^3))\cap L^2(0,T;D^{2,q}(\bbr^3))$, $3<q\le 6$, with $\bu^n|_{t=0}=\bu_0$ in $H^{2}(\bbr^3)$, $(f^{n+1},\bu^{n+1},\nabla P^{n+1})$ is determined by
\begin{equation} \label{eq-cs-s-appro}
     \begin{dcases}
         f^{n+1}_t + \bv \cdot \nabla_{\bx} f^{n+1}+ \nabla_{\bv} \cdot (L[f^{n+1}]f^{n+1}+(\bu^{n}-\bv)f^{n+1})=0,\\
         \bu^{n+1}_t+\nabla P^{n+1}=\Delta \bu^{n+1}+\int_{\bbr^3}f^{n+1}(\bv-\bu^{n+1})d\bv,\\
         \nabla \cdot \bu^{n+1}=0,
     \end{dcases}
\end{equation}
subject to the initial data
\begin{equation} \label{eq-cs-s-approcau}
 f^{n+1}|_{t=0}=f_0,  \quad \bu^{n+1}|_{t=0}=\bu_0,
\end{equation}
with $\bu_0$ satisfying the compatibility condition $\nabla \cdot \bu_0=0$. From Proposition \ref{prop-kine-cs-wp} and \ref{prop-s}, we know $(f^{n+1},\bu^{n+1},\nabla P^{n+1})$ is well-defined. In the iteration procedure, $\bu^0$ is set by
\begin{equation} \label{eq-cs-s-approini}
     \begin{dcases}
       \bu^0_t=\Delta\bu^0,\\
       \bu^0|_{t=0}=\bu_0 \in H^2.
     \end{dcases}
\end{equation}
It is easy to see
\[
 \bu^0 \in C([0,\infty);H^{2})\cap L^2(0,\infty;D^{2,q}).
\]
Moreover, it holds that
\begin{equation} \label{eq-approiniest}
 \sup_{0\le t \le \infty}|\bu^0(t)|_{H^2}^2+ \int_0^\infty \Big(|\bu^0_t(t)|_{H^1}^2+|\bu^0(t)|_{D^{2,q}}^2\Big)dt \le C|\bu_0|_{H^2}^2.
\end{equation}
\vskip 0.3cm
\noindent \textbf{Uniform Bound on Approximate Solutions}
\vskip 0.3cm
Define
\[
 C_0:=C\Big(1+|f_0|_{H^1_{\omega}}^4\Big)\Big(1+|\bu_0|_{H^2}^2\Big).
\]
Suppose that there exists $T_* \in (0,T]$, to be determined later, such that
\begin{equation}\label{eq-indu-assup}
 \sup_{0\le t \le T_*}|\bu^n(t)|_{H^2}+\int_0^{T_*} \Big(|\bu^n_t(t)|_{H^1}^2+|\bu^n(t)|_{D^{2,q}}^2\Big) dt \le C_0, \quad n\in \bbn,
\end{equation}
Next we prove by induction that \eqref{eq-indu-assup} holds for all  $n\in \bbn$.
Using the induction hypothesis \eqref{eq-indu-assup} and taking $T_1:=T_1(\varphi,f_0,R_0,C_0)$ suitably small, we infer from Proposition \ref{prop-kine-cs-wp} that
\begin{equation}\label{eq-appro-f-norm}
    \sup_{0\le t \le T_1}\left|f^{n+1}(t)\right|_{H^1_{\omega}}\le 2\left|f_0\right|_{H^1_{\omega}}.
\end{equation}
Multiplying $\eqref{eq-cs-s-appro}_2$ by $\bu^{n+1}$ and integrating the resulting equation over $\bbr^3$, we have
\begin{equation}\label{eq-appro-dif-usqua}
  \begin{aligned}
    &\frac12 \frac{d}{dt}\left|\bu^{n+1}\right|_{L^2}^2 +\left|\nabla \bu^{n+1}\right|_{L^2}^2\\
    =&\int_{\bbr^3}\int_{\bbr^3}f^{n+1}(\bv-\bu^{n+1}) \cdot \bu^{n+1}d\bv d\bx\\
    \le&C\left|\int_{\bbr^3} f^{n+1} \bv d\bv\right|_{L^{\frac65}} \left|\nabla \bu^{n+1}\right|_{L^2}\\
    \le&\frac12 \left|\nabla \bu^{n+1}\right|_{L^2}^2+C\left|f^{n+1}\right|_{L_{\omega}^2}^2,
  \end{aligned}
\end{equation}
where we have used the inequality
\begin{equation*}
  \begin{aligned}
    \left|\int_{\bbr^3} f^{n+1} \bv d\bv\right|_{L^{\frac65}}\le &\left|\int_{\bbr^3}f^{n+1} \bv d\bv\right|_{L^1}^{\frac23}\left|\int_{\bbr^3}f^{n+1} \bv d\bv\right|_{L^2}^{\frac13}\\
    \le&\frac23 \left|\int_{\bbr^3}f^{n+1} \bv d\bv\right|_{L^1}+ \frac13 \left|\int_{\bbr^3} f^{n+1}\bv d\bv\right|_{L^2}\\
    \le &C\left|f^{n+1}\right|_{L_{\omega}^2}.
  \end{aligned}
\end{equation*}
Take $0<T_2 \le T_1$. Integrating \eqref{eq-appro-dif-usqua} over $[0,T_2]$ leads to
\begin{equation}\label{eq-appro-usqua}
  \sup_{0\le t \le T_2} \left|\bu^{n+1}(t)\right|_{L^2}^2 +\int_0^{T_2} \left|\nabla \bu^{n+1}(t)\right|_{L^2}^2 dt \le \left|\bu_0\right|_{L^2}^2 +C\left|f_0\right|_{H_{\omega}^1}^2 T_2.
\end{equation}
Multiplying $\eqref{eq-cs-s-appro}_2$ by $\bu^{n+1}_t$ and integrating the resulting equation over $\bbr^3$, we deduce that
\begin{equation}\label{eq-appro-dif-gradusqua}
  \begin{aligned}
    &\frac12 \frac{d}{dt}\left|\nabla \bu^{n+1}\right|_{L^2}^2 +\left|\bu^{n+1}_t\right|_{L^2}^2\\
    =&\int_{\bbr^3}\int_{\bbr^3}f^{n+1}(\bv-\bu^{n+1}) \cdot \bu^{n+1}_t d\bv d\bx\\
    \le&\left|\int_{\bbr^3} f^{n+1} \bv d\bv\right|_{L^2} \left|\bu^{n+1}_t\right|_{L^2}+C \left|\int_{\bbr^3} f^{n+1} d\bv\right|_{L^3} \left|\nabla \bu^{n+1}\right|_{L^2} \left|\bu^{n+1}_t\right|_{L^2}\\
    \le&\frac12 \left|\bu^{n+1}_t\right|_{L^2}^2+C\left|f^{n+1}\right|_{H_{\omega}^1}^2 \Big( 1+\left|\nabla \bu^{n+1}\right|_{L^2}^2 \Big),
  \end{aligned}
\end{equation}
where we have used the inequality
\begin{equation*}
  \begin{aligned}
    \left|\int_{\bbr^3}f^{n+1}  d\bv\right|_{L^3}=&\left|\int_{\bbr^3}f^{n+1}  d\bv\right|_{L^2}^{\frac12}\left|\int_{\bbr^3}f^{n+1} d\bv\right|_{L^6}^{\frac12}\\
    \le&\frac12 \left|\int_{\bbr^3}f^{n+1} d\bv\right|_{L^2}+ \frac12 \left|\int_{\bbr^3}\left|\nabla f^{n+1}\right| d\bv\right|_{L^2}\\
    \le &C\left|f^{n+1}\right|_{H_{\omega}^1}.
  \end{aligned}
\end{equation*}
Integrating \eqref{eq-appro-dif-gradusqua} over $[0,T_2]$ gives
\begin{equation}\label{eq-appro-gradusqua}
  \begin{gathered}
   \sup_{0\le t \le T_2} \left|\nabla \bu^{n+1}(t)\right|_{L^2}^2 +\int_0^{T_2} \left|\bu_t^{n+1}(t)\right|_{L^2}^2 dt\\ \le \left|\nabla \bu_0\right|_{L^2}^2 +C\left|f_0\right|_{H_{\omega}^1}^2 \Big(T_2 +\left|\bu_0\right|_{L^2}^2 +C\left|f_0\right|_{H_{\omega}^1}^2 T_2 \Big).
  \end{gathered}
\end{equation}
Differentiating $\eqref{eq-cs-s-appro}_2$ with respect to $t$, we infer that
\begin{equation}\label{eq-appro-difttu}
  \begin{gathered}
   \bu^{n+1}_{tt}+ \nabla P^{n+1}_t=\Delta \bu^{n+1}_t +\int_{\bbr^3}f^{n+1}_t(\bv-\bu^{n+1})d\bv\\ -\int_{\bbr^3}f^{n+1}d\bv \bu^{n+1}_t, \quad \text{in $\mathcal{D}'([0,T)\times \bbr^3)$}.
  \end{gathered}
\end{equation}
Take $\bu^{n+1}_t$ as the test function. It follows from \eqref{eq-appro-difttu} that
\begin{equation}\label{eq-approdif-gradusqua}
    \frac12 \frac{d}{dt}\left|\bu_t^{n+1}\right|_{L^2}^2 +\left|\nabla \bu^{n+1}_t\right|_{L^2}^2
    \le \int_{\bbr^3}\int_{\bbr^3}f_t^{n+1}(\bv-\bu^{n+1}) \cdot \bu^{n+1}_t d\bv d\bx,
\end{equation}
Using $\eqref{eq-cs-s-appro}_1$, we estimate the right-hand side of \eqref{eq-appro-dif-gradusqua} as follows.
\begin{equation}\label{eq-approdif-gradusqua-rh}
  \begin{aligned}
    &\int_{\bbr^3}\int_{\bbr^3}f_t^{n+1}(\bv-\bu^{n+1}) \cdot \bu^{n+1}_t d\bv d\bx\\
    =&-\int_{\bbr^3} \int_{\bbr^3} \Big[\bv \cdot \nabla_{\bx} f^{n+1}+ \nabla_{\bv} \cdot \big(L[f^{n+1}]f^{n+1}+(\bu^{n}-\bv)f^{n+1}\big)\Big] (\bv-\bu^{n+1}) \cdot \bu^{n+1}_t  d\bv d\bx\\
    =&\int_{\bbr^3} \int_{\bbr^3} f^{n+1} \bv \otimes (\bv-\bu^{n+1}) d\bv : \nabla \bu^{n+1}_t d\bx-\int_{\bbr^3} \int_{\bbr^3} f^{n+1} \bv d\bv \cdot \nabla \bu^{n+1} \cdot \bu^{n+1}_t d\bx\\
    &+\int_{\bbr^3} \int_{\bbr^3} \Big[f^{n+1}L[f^{n+1}]+f^{n+1}(\bu^{n}-\bv)\Big] d\bv \cdot \bu^{n+1}_t d\bx\\
    \le&C \left|\int_{\bbr^3} f^{n+1} \bv^2 d\bv\right|_{L^2} \left|\nabla \bu^{n+1}_t\right|_{L^2}+C \left|\int_{\bbr^3} f^{n+1}\bv d\bv\right|_{L^3} \left|\nabla \bu^{n+1}\right|_{L^2} \left|\nabla \bu^{n+1}_t\right|_{L^2}\\
    &+C\left|f^{n+1} \langle \bv \rangle \right|_{L^1} \left|\int_{\bbr^3} f^{n+1} \langle \bv \rangle d\bv\right|_{L^{\frac65}}\left|\nabla \bu^{n+1}_t\right|_{L^2}\\
     &+C\left(1+\left|\bu^n\right|_{L^\infty} \right) \left| \int_{\bbr^3} f^{n+1} \langle \bv \rangle d\bv\right|_{L^{\frac65}}\left|\nabla \bu^{n+1}_t\right|_{L^2}\\
     \le&\frac12 \left|\nabla \bu^{n+1}_t\right|_{L^2}^2+C\left|f^{n+1}\right|_{H_{\omega}^1}^4+ C\left|f^{n+1}\right|_{H_{\omega}^1}^2 \Big( 1+\left|\nabla \bu^{n+1}\right|_{L^2}^2 \Big)+C\left|f^{n+1}\right|_{H_{\omega}^1}^2 \left( 1+\left|\bu^n\right|_{L^\infty}^2 \right).
  \end{aligned}
\end{equation}
In the derivation of the last inequality in \eqref{eq-approdif-gradusqua-rh}, we have used the following inequalities.
\begin{equation*}
  \begin{aligned}
    \left|\int_{\bbr^3}f^{n+1} \bv^2 d\bv\right|_{L^2}=&\left|\int_{\bbr^3}f^{n+1} \bv^2 \langle \bv \rangle^{2\alpha}\langle \bv \rangle^{-2\alpha}d\bv\right|_{L^2}\\
    \le& \left|\langle \bv \rangle^{-2\alpha}\right|_{L^2}\left| f^{n+1} \bv^2 \langle \bv \rangle^{2\alpha}\right|_{L^2}\\
    \le& C\left|f^{n+1}\right|_{L_{\omega}^2};
  \end{aligned}
\end{equation*}
\begin{equation*}
  \begin{aligned}
    \left| f^{n+1} \langle \bv \rangle\right|_{L^1}=&\left|\int_{\bbr^3}\int_{\bbr^3}f^{n+1} \langle \bv \rangle(1+\bx^2+\bv^2)^{\frac{3\gamma}{2}}(1+\bx^2+\bv^2)^{-\frac{3\gamma}{2}}d\bx d\bv\right|\\
    \le& \left|(1+\bx^2+\bv^2)^{-\frac{3\gamma}{2}}\right|_{L^2}|f^{n+1}|_{L_{\omega}^2}\\
    \le& C\left|f^{n+1}\right|_{L_{\omega}^2};
  \end{aligned}
\end{equation*}
\begin{equation*}
  \begin{aligned}
    \left|\int_{\bbr^3}f^{n+1} \bv d\bv\right|_{L^3}=&\left|\int_{\bbr^3}f^{n+1} \bv d\bv\right|_{L^1}^{\frac15}\left|\int_{\bbr^3}f^{n+1} \bv d\bv\right|_{L^6}^{\frac45}\\
    \le&\frac15 \left|\int_{\bbr^3}f^{n+1} \bv d\bv\right|_{L^1}+ \frac45 \left|\int_{\bbr^3}|\nabla f^{n+1}| |\bv| d\bv\right|_{L^2}\\
    \le &C\left|f^{n+1}\right|_{H_{\omega}^1};\\[2mm]
    \left|\int_{\bbr^3} f^{n+1} \langle \bv \rangle d\bv\right|_{L^{\frac65}}\le &\left|\int_{\bbr^3}f^{n+1} \langle \bv \rangle d\bv\right|_{L^1}^{\frac23}\left|\int_{\bbr^3}f^{n+1} \langle \bv \rangle d\bv\right|_{L^2}^{\frac13}\\
    \le&\frac23 \left|\int_{\bbr^3}f^{n+1} \langle \bv \rangle d\bv\right|_{L^1}+ \frac13 \left|\int_{\bbr^3} f^{n+1}\langle \bv \rangle d\bv\right|_{L^2}\\
    \le &C\left|f^{n+1}\right|_{L_{\omega}^2}.
  \end{aligned}
\end{equation*}
Since $\bu^{n+1}_t \in C([0,T_1];L^2)$, we have
\begin{equation}\label{eq-appro-dift-usquainit}
  \begin{aligned}
    \left|\bu^{n+1}_t(0) \right|_{L^2}^2=&\left|\Delta \bu_0 +\mathcal P \int_{\bbr^3} f_0(\bv-\bu_0)d\bv \right|_{L^2}^2\\
     \le& C \left|\bu_0 \right|_{D^2}^2 +C\left|f_0 \right|_{L_{\omega}^2}^2 +C\left|f_0 \right|_{L_{\omega}^2}^2\left|\bu_0 \right|_{H^2}^2\\
     \le& C\left(1+\left|\bu_0 \right|_{H^2}^2\right) \left(1+\left|f_0 \right|_{H_{\omega}^1}^2\right).
  \end{aligned}
\end{equation}
Substituting \eqref{eq-approdif-gradusqua-rh} into \eqref{eq-approdif-gradusqua}, and integrating the resulting inequality over $[0,T_2]$ lead to
\begin{equation}\label{eq-approdift-gradusqua}
  \begin{aligned}
   &\sup_{0\le t \le T_2} \left|\bu_t^{n+1}(t)\right|_{L^2}^2 +\int_0^{T_2} \left|\nabla \bu_t^{n+1}(t)\right|_{L^2}^2 dt\\
   \le &\left| \bu_t^{n+1}(0)\right|_{L^2}^2 +C(C_0)\left|f_0\right|_{H_{\omega}^1}^2 T_2 +C\left|f_0\right|_{H_{\omega}^1}^4 T_2+ C\left|f_0\right|_{H_{\omega}^1}^2 \int_0^{T_2} \left|\nabla \bu^{n+1}(t)\right|_{L^2}^2 dt\\
   \le& C\left(1+\left|\bu_0 \right|_{H^2}^2\right) \left(1+\left|f_0 \right|_{H_{\omega}^1}^2\right) +C(C_0)\left(1+\left|f_0 \right|_{H_{\omega}^1}^4\right)T_2 \\&+C\left|f_0 \right|_{H_{\omega}^1}^2 \left(\left|\bu_0\right|_{L^2}^2 +C\left|f_0\right|_{H_{\omega}^1}^2 T_2 \right),
  \end{aligned}
\end{equation}
where we have used the induction assumption \eqref{eq-indu-assup}, \eqref{eq-appro-usqua},  and  \eqref{eq-appro-dift-usquainit}. Take $T_2:=T_2(f_0,C_0)$ suitably small. We know from \eqref{eq-appro-usqua}, \eqref{eq-appro-gradusqua}, and \eqref{eq-approdift-gradusqua} that
\begin{equation}\label{eq-appro-ugraduudiftsqua}
  \begin{aligned}
    &\sup_{0\le t \le T_2} \left|\bu^{n+1}(t)\right|_{L^2}^2 +\int_0^{T_2} \left|\bu_t^{n+1}(t)\right|_{L^2}^2 dt \le 1+ \left|\nabla \bu_0\right|_{L^2}^2;\\
    &\sup_{0\le t \le T_2} \left|\nabla \bu^{n+1}(t)\right|_{L^2}^2 +\int_0^{T_2} \left|\bu_t^{n+1}(t)\right|_{L^2}^2 dt
   \le C\Big(1+\left|\bu_0 \right|_{H^1}^2\Big) \Big(1+\left|f_0 \right|_{H_{\omega}^1}^2\Big);\\
    &\sup_{0\le t \le T_2} \left|\bu_t^{n+1}(t)\right|_{L^2}^2 +\int_0^{T_2} \left|\nabla \bu_t^{n+1}(t)\right|_{L^2}^2 dt
   \le C\Big(1+\left|\bu_0 \right|_{H^2}^2\Big) \Big(1+\left|f_0 \right|_{H_{\omega}^1}^2\Big).
  \end{aligned}
\end{equation}
Project $\eqref{eq-cs-s-appro}_2$ on the divergence-free field to eliminate the pressure term. We obtain
\begin{equation}\label{eq-appro-cs-s-df}
  -\Delta \bu^{n+1}=-\bu_t^{n+1}+\mathcal P \int_{\bbr^3} f^{n+1}(\bv-\bu^{n+1}) d\bv.
\end{equation}
From elliptic estimates on \eqref{eq-appro-cs-s-df}, we deduce that
\begin{equation}\label{eq-appro-gradtwousqua}
  \left| \bu^{n+1} \right|_{D^2}^2 \le C\left| \bu_t^{n+1} \right|_{L^2}^2+ C\left|f^{n+1}\right|_{H_{\omega}^1}^2 \Big( 1+\left|\nabla \bu^{n+1}\right|_{L^2}^2 \Big).
\end{equation}
By virtue of \eqref{eq-appro-f-norm} and $\eqref{eq-appro-ugraduudiftsqua}_2-\eqref{eq-appro-ugraduudiftsqua}_3$, we have
\begin{equation}\label{eq-appro-uggradsqua}
    \sup_{0\le t \le T_2} \left|\bu^{n+1}(t)\right|_{D^2}^2 \le C\Big(1+\left|\bu_0 \right|_{H^2}^2\Big) \Big(1+\left|f_0 \right|_{H_{\omega}^1}^4\Big).
\end{equation}
We employ the elliptic estimates on \eqref{eq-appro-cs-s-df} again to obtain
\begin{equation}\label{eq-appro-gradtwousixsqua}
  \left| \bu^{n+1} \right|_{D^{2,6}}^2 \le C\left|\nabla \bu_t^{n+1} \right|_{L^2}^2+ C\left|f^{n+1}\right|_{H_{\omega}^1}^2 \Big( 1+\left| \bu^{n+1}\right|_{H^2}^2 \Big),
\end{equation}
where we have used the Sobolev inequality
\begin{equation}\label{eq-linftsobolev}
  \left| \bu^{n+1} \right|_{L^\infty}\le  C\left| \bu^{n+1}\right|_{H^2} \quad \text{in $\bbr^3$}.
\end{equation}
Take $0<T_3\le T_2$ suitably small. Using \eqref{eq-appro-ugraduudiftsqua}, and \eqref{eq-appro-gradtwousqua}-\eqref{eq-appro-gradtwousixsqua}, we get by interpolation that
\begin{equation}\label{eq-appro-uggradqsqua}
  \begin{aligned}
    \int_0^{T_3} \left|\bu^{n+1}(t)\right|_{D^{2,q}}^2 dt \le& 2\int_0^{T_3} \left|\bu^{n+1}(t)\right|_{D^{2}}^2 dt+ 2\int_0^{T_3} \left|\bu^{n+1}(t)\right|_{D^{2,6}}^2 dt\\
   \le& C\Big(1+\left|\bu_0 \right|_{H^1}^2\Big) \Big(1+\left|f_0 \right|_{H_{\omega}^1}^2\Big).
  \end{aligned}
\end{equation}
Let $T_*:=\min\{T_1,T_2,T_3\}$. Adding \eqref{eq-appro-ugraduudiftsqua}, \eqref{eq-appro-uggradsqua} and \eqref{eq-appro-uggradqsqua} together, we obtain
\begin{equation}\label{eq-appro-induassunext}
    \sup_{0\le t \le T_*}|\bu^{n+1}(t)|_{H^2}+\int_0^{T_*}  \Big(|\bu^{n+1}_t(t)|_{H^1}^2+|\bu^{n+1}(t)|_{D^{2,q}}^2\Big) dt \le C_0.
\end{equation}
From \eqref{eq-approiniest}, we know $\bu^0(t,\bx)$ also satisfies \eqref{eq-indu-assup}. Thus, we conclude by induction that \eqref{eq-indu-assup} holds for all $n\in \bbn$.
\vskip 3mm
\noindent \textbf{Convergence of Approximate Solutions}
\vskip 3mm
Define
\[
 \overline{f}^{n+1}:=f^{n+1}-f^n,\quad \overline{\bu}^{n+1}:=\bu^{n+1}-\bu^n,\quad \overline{P}^{n+1}:=P^{n+1}-P^n.
\]
It follows from \eqref{eq-cs-s-appro}-\eqref{eq-cs-s-approcau} that
\begin{equation} \label{eq-cs-s-appro-dif}
     \begin{dcases}
      \overline{f}^{n+1}_t +\bv \cdot\nabla_{\bx} \overline{f}^{n+1} +\nabla_{\bv}\cdot\Big[L[f^{n+1}]\overline{f}^{n+1}+(\bu^n-\bv)\overline{f}^{n+1}\Big]\\
           \qquad+\nabla_{\bv}\cdot\Big[L[\overline{f}^{n+1}]f^n +f^n \overline{\bu}^n\Big]=0,\\
       \overline{\bu}^{n+1}_t  +\nabla\overline{P}^{n+1}
          =\Delta \overline{\bu}^{n+1}  -\int_{\bbr^3}f^{n}\overline{\bu}^{n+1}d\bv +\int_{\bbr^3}\overline{f}^{n+1}(\bv-\bu^{n+1})d\bv,\\
       \nabla\cdot\overline{\bu}^{n+1}=0,
     \end{dcases}
\end{equation}
and
\begin{equation} \label{eq-cs-s-approini-dif}
 \overline{f}^{n+1}|_{t=0}=0,  \quad \overline{\bu}^{n+1}|_{t=0}=0.
\end{equation}
Multiplying $\eqref{eq-cs-s-appro-dif}_2$ by $\overline{\bu}^{n+1}$ and integrating the resulting equation over $\bbr^3$, we deduce that
\begin{equation}\label{eq-app-udif-squa}
  \begin{aligned}
    &\frac12\frac{d}{dt}\left|\overline{\bu}^{n+1}\right|_{L^2}^2 +\left|\nabla\overline{\bu}^{n+1}\right|_{L^2}^2\\
    \le& \int_{\bbr^3}\int_{\bbr^3}\overline{f}^{n+1}(\bv-\bu^{n+1})d\bv\cdot\overline{\bu}^{n+1}d\bx\\
   \le&\left|\int_{\bbr^3}\overline{f}^{n+1}\bv d\bv\right|_{L^{\frac32}} \left|\overline{\bu}^{n+1}\right|_{L^3} +\left|\int_{\bbr^3}\overline{f}^{n+1} d\bv\right|_{L^{\frac32}} \left|\bu^{n+1}\right|_{L^6} \left|\overline{\bu}^{n+1}\right|_{L^6}\\
   \le& C\left|\int_{\bbr^3}\overline{f}^{n+1}\bv d\bv\right|_{L^{\frac32}} \bigg(\left|\overline{\bu}^{n+1}\right|_{L^2}+ \left|\nabla\overline{\bu}^{n+1}\right|_{L^2} \bigg)\\
   &+C\left|\int_{\bbr^3}\overline{f}^{n+1} d\bv\right|_{L^{\frac32}} \left|\nabla{\bu}^{n+1}\right|_{L^2} \left|\nabla\overline{\bu}^{n+1}\right|_{L^2}\\
   \le& \frac12 \left|\nabla\overline{\bu}^{n+1}\right|_{L^2}^2 +\frac12 \left|\overline{\bu}^{n+1}\right|_{L^2}^2 +C\bigg(1+ \left|\nabla{\bu}^{n+1}\right|_{L^2}^2 \bigg)\left|\int_{\bbr^3}\Big|\overline{f}^{n+1}\Big|\langle \bv \rangle d\bv\right|_{L^{\frac32}},
  \end{aligned}
\end{equation}
that is,
\begin{equation}\label{eq-app-udif-squashort}
  \frac{d}{dt}\left|\overline{\bu}^{n+1}\right|_{L^2}^2 +\left|\nabla\overline{\bu}^{n+1}\right|_{L^2}^2
   \le \left|\overline{\bu}^{n+1}\right|_{L^2}^2 \\+C\bigg(1+ \left|\nabla{\bu}^{n+1}\right|_{L^2}^2 \bigg)\left|\overline{f}^{n+1} \left(1+ \bv^2\right)^{\alpha}\right|_{L^{\frac32}},
\end{equation}
where we have used the following inequality
\[
\begin{aligned}
  &\left|\int_{\bbr^3}\Big|\overline{f}^{n+1}\Big|\langle \bv \rangle d\bv\right|_{L^{\frac32}}\\
  \le& \left(\int_{\bbr^6}\bigg[\Big|\overline{f}^{n+1}\Big| \left(1+ \bv^2\right)^{\alpha}\bigg]^{\frac32}d\bv d\bx \right)^{\frac23} \left(\int_{\bbr^3}(1+ \bv^2)^{\frac32-3\alpha}d\bv \right)^{\frac13}\\
  \le& C\left|\overline{f}^{n+1} \left(1+ \bv^2\right)^{\alpha}\right|_{L^{\frac32}}, \quad \alpha>1.
 \end{aligned}
\]
Define $\Lambda(\bv):= \left(1+ \bv^2\right)^{\alpha}$, $\ \alpha>1$. Multiplying $\eqref{eq-cs-s-appro-dif}_1$ by $\Lambda(\bv)$, we deduce that
\begin{equation}\label{eq-app-kcswt-dif}
  \begin{aligned}
    &\Big(\overline{f}^{n+1}\Lambda\Big)_t +\bv \cdot\nabla_{\bx} \Big(\overline{f}^{n+1}\Lambda\Big) +\nabla_{\bv}\cdot\Big[L[f^{n+1}]\overline{f}^{n+1}\Lambda+(\bu^n-\bv)\overline{f}^{n+1}\Lambda\Big]\\
    =&L[f^{n+1}]\cdot\nabla_{\bv}\Lambda\overline{f}^{n+1} +(\bu^n-\bv)\cdot\nabla_{\bv}\Lambda\overline{f}^{n+1}\\
      &-\bigg(\nabla_{\bv}\cdot L[\overline{f}^{n+1}]f^n +L[\overline{f}^{n+1}]\cdot\nabla_{\bv} f^n \bigg)\Lambda -\overline{\bu}^n \cdot\nabla_{\bv} f^n \Lambda.
  \end{aligned}
\end{equation}
Multiplying \eqref{eq-app-kcswt-dif} by $\frac32 \left|\overline{f}^{n+1}\Lambda\right|^{\frac12}\text{sgn}\overline{f}^{n+1}$ leads to
\begin{equation}\label{eq-app-kcswt-dif-frac}
  \begin{aligned}
    &\frac{\partial}{\partial t}\left|\overline{f}^{n+1}\Lambda\right|^{\frac32} +\bv \cdot\nabla_{\bx} \left|\overline{f}^{n+1}\Lambda\right|^{\frac32} +\nabla_{\bv}\cdot\bigg[L[f^{n+1}]\left|\overline{f}^{n+1}\Lambda\right|^{\frac32} +(\bu^n-\bv)\left|\overline{f}^{n+1}\Lambda\right|^{\frac32}\bigg]\\
    =&-\frac12 \nabla\cdot L[f^{n+1}]\left|\overline{f}^{n+1}\Lambda\right|^{\frac32} +\frac32 \left|\overline{f}^{n+1}\Lambda\right|^{\frac32}\\
    &+\frac32 \left|\overline{f}^{n+1}\Lambda\right|^{\frac12}L[f^{n+1}]\cdot\nabla_{\bv}\Lambda \left|\overline{f}^{n+1}\right| +\frac32 \left|\overline{f}^{n+1}\Lambda\right|^{\frac12}(\bu^n-\bv)\cdot\nabla_{\bv}\Lambda \left|\overline{f}^{n+1}\right|\\
    &-\frac32 \left|\overline{f}^{n+1}\Lambda\right|^{\frac12}\text{sgn}\overline{f}^{n+1} \bigg(\nabla_{\bv}\cdot L[\overline{f}^{n+1}]f^n +L[\overline{f}^{n+1}]\cdot\nabla_{\bv} f^n \bigg)\Lambda\\ &-\frac32 \left|\overline{f}^{n+1}\Lambda\right|^{\frac12}\text{sgn}\overline{f}^{n+1}\overline{\bu}^n \cdot\nabla_{\bv} f^n \Lambda.
  \end{aligned}
\end{equation}
Integrating \eqref{eq-app-kcswt-dif-frac} over $\bbr^3\times \bbr^3$ gives
\begin{equation}\label{eq-app-kcswt-dif-frac-gron}
  \begin{aligned}
    &\frac{d}{d t}\left|\overline{f}^{n+1}\Lambda\right|_{L^{\frac32}}^{\frac32}\\
    =&\int_{\bbr^3}\int_{\bbr^3}\Bigg(-\frac12 \nabla\cdot L[f^{n+1}]\left|\overline{f}^{n+1}\Lambda\right|^{\frac32} +\frac32 \left|\overline{f}^{n+1}\Lambda\right|^{\frac32}\Bigg)d\bx d\bv \\
    &+\int_{\bbr^3}\int_{\bbr^3}\Bigg(\frac32 \left|\overline{f}^{n+1}\Lambda\right|^{\frac12}L[f^{n+1}]\cdot\nabla_{\bv}\Lambda \left|\overline{f}^{n+1}\right|\\ &\qquad \qquad +\frac32 \left|\overline{f}^{n+1}\Lambda\right|^{\frac12}(\bu^n-\bv)\cdot\nabla_{\bv}\Lambda \left|\overline{f}^{n+1}\right|\Bigg)d\bx d\bv\\
    &-\int_{\bbr^3}\int_{\bbr^3} \frac32 \left|\overline{f}^{n+1}\Lambda\right|^{\frac12}\text{sgn}\overline{f}^{n+1} \bigg(\nabla_{\bv}\cdot L[\overline{f}^{n+1}]f^n +L[\overline{f}^{n+1}]\cdot\nabla_{\bv} f^n \bigg)\Lambda d\bx d\bv\\ &-\int_{\bbr^3}\int_{\bbr^3} \frac32 \left|\overline{f}^{n+1}\Lambda\right|^{\frac12}\text{sgn}\overline{f}^{n+1}\overline{\bu}^n \cdot\nabla_{\bv} f^n \Lambda d\bx d\bv\\
    =:&\sum_{i=1}^4N_i.
  \end{aligned}
\end{equation}
We estimate each $N_i$ $(i=1,2,3,4)$ as follows.
\begin{equation*}
  \begin{aligned}
     |N_1|\le& C|f^{n+1}|_{L^1}\left|\overline{f}^{n+1}\Lambda\right|_{L^{\frac32}}^{\frac32} +C\left|\overline{f}^{n+1}\Lambda\right|_{L^{\frac32}}^{\frac32}\\
          \le& C|f^{n+1}|_{L_{\omega}^2}\left|\overline{f}^{n+1}\Lambda\right|_{L^{\frac32}}^{\frac32} +C\left|\overline{f}^{n+1}\Lambda\right|_{L^{\frac32}}^{\frac32};\\
     |N_2|\le& C|f^{n+1}\langle \bv \rangle|_{L^1}\left|\overline{f}^{n+1}\Lambda\right|_{L^{\frac32}}^{\frac32} +C\Big(1+|\bu^n|_{L^{\infty}}\Big) \left|\overline{f}^{n+1}\Lambda\right|_{L^{\frac32}}^{\frac32}\\
         \le& C|f^{n+1}|_{L_{\omega}^2}\left|\overline{f}^{n+1}\Lambda\right|_{L^{\frac32}}^{\frac32}+C\Big(1+|\bu^n|_{L^{\infty}}\Big) \left|\overline{f}^{n+1}\Lambda\right|_{L^{\frac32}}^{\frac32};\\
     |N_3|\le& C\left|\overline{f}^{n+1}\Lambda\right|_{L^{\frac32}}^{\frac12} \left|\overline{f}^{n+1}\right|_{L^1} |f^n\Lambda|_{L^{\frac32}}\\ &+C\left|\overline{f}^{n+1}\Lambda\right|_{L^{\frac32}}^{\frac12} \left|\overline{f}^{n+1}\langle \bv \rangle\right|_{L^1} |\nabla_{\bv}f^n \langle \bv \rangle \Lambda |_{L^{\frac32}}\\
     \le& C\bigg(|f^n\Lambda|_{L^{\frac32}} +|\nabla_{\bv}f^n \langle \bv \rangle \Lambda|_{L^{\frac32}}\bigg) \left|\overline{f}^{n+1}\Lambda\right|_{L^{\frac32}}^{\frac12} \left|\overline{f}^{n+1}\langle \bv \rangle\right|_{L^1}\\
     \le& C|f^n|_{H_{\omega}^1} \left|\overline{f}^{n+1}\Lambda\right|_{L^{\frac32}}^{\frac12} \left|\overline{f}^{n+1}\langle \bv \rangle\right|_{L^1};\\
     |N_4|\le& C\left|\overline{f}^{n+1}\Lambda\right|_{L^{\frac32}}^{\frac12} \left|\nabla\overline{\bu}^n\right|_{L^2} |\nabla_{\bv}f^n\Lambda|_{L^{2}}\\
     \le& C|\nabla_{\bv}f^n|_{L_{\omega}^2}\left|\overline{f}^{n+1}\Lambda\right|_{L^{\frac32}}^{\frac12} \left|\nabla\overline{\bu}^n\right|_{L^2}.
  \end{aligned}
\end{equation*}
In the above estimates, we have used the following inequalities.
\begin{equation*}
  \begin{aligned}
     &|f^{n+1}\langle \bv \rangle|_{L^1}\\
     \le& \Bigg(\int_{\bbr^3}\int_{\bbr^3}|f^{n+1}|^2(1+\bv^2)(1+\bx^2+\bv^2)^{3\gamma} d\bx d\bv \Bigg)^{\frac12}|(1+\bx^2+\bv^2)^{-\frac{3\gamma}{2}}|_{L^2}\\
     \le& C|f^{n+1}|_{L_{\omega}^2};
  \end{aligned}
\end{equation*}
\begin{equation*}
  \begin{aligned}
     |f^n\Lambda|_{L^{\frac32}}\le& \Bigg(\int_{\bbr^3}\int_{\bbr^3}|f^n|^2(1+\bv^2)^{2\alpha}(1+\bx^2+\bv^2)^{\gamma} d\bx d\bv \Bigg)^{\frac12}|(1+\bx^2+\bv^2)^{-\frac{\gamma}{2}}|_{L^6}\\
     \le& C|f^n|_{L_{\omega}^2};
  \end{aligned}
\end{equation*}
\begin{equation*}
  \begin{aligned}
     &|\nabla_{\bv}f^n \langle \bv \rangle \Lambda|_{L^{\frac32}}\\
     \le& \Bigg(\int_{\bbr^3}\int_{\bbr^3}|\nabla_{\bv}f^n|^2(1+\bv^2)^{1+2\alpha} (1+\bx^2+\bv^2)^{\gamma} d\bx d\bv \Bigg)^{\frac12}|(1+\bx^2+\bv^2)^{-\frac{\gamma}{2}}|_{L^6}\\
     \le& C|\nabla_{\bv}f^n|_{L_{\omega}^2}.
   \end{aligned}
\end{equation*}
Substituting the estimates on $N_i$ $(i=1,2,3,4)$ into \eqref{eq-app-kcswt-dif-frac-gron}, we deduce that
\begin{equation}\label{eq-app-kcswt-dif-frac-gronshort}
  \begin{aligned}
    \frac{d}{d t}\left|\overline{f}^{n+1}\Lambda\right|_{L^{\frac32}}^{2}
    \le&\bigg(C +C|f^{n+1}|_{L_{\omega}^2} +C|\bu^n|_{L^{\infty}} +C|f^n|_{H_{\omega}^1}^2 \bigg)\left|\overline{f}^{n+1}\Lambda\right|_{L^{\frac32}}^{2}\\
    &+\left|\overline{f}^{n+1}\langle \bv \rangle\right|_{L^1}^2 +\frac18 \left|\nabla\overline{\bu}^n\right|_{L^2}^2.
  \end{aligned}
\end{equation}
Similarly, we have
\begin{equation}\label{eq-app-kcswt-dif-lonesqu-gron}
  \begin{aligned}
    \frac{d}{d t}\left|\overline{f}^{n+1}\langle \bv \rangle\right|_{L^1}^2
    \le&\bigg(C +C|f^{n+1}|_{L_{\omega}^2} +C|\bu^n|_{L^{\infty}} +C|f^n|_{H_{\omega}^1}^2 \bigg)\left|\overline{f}^{n+1}\langle \bv \rangle\right|_{L^1}^2 \\ &+\frac18  \left|\nabla\overline{\bu}^n\right|_{L^2}^2.
  \end{aligned}
\end{equation}
Define
\[
 F^{n+1}(t):=\left|\overline{\bu}^{n+1}\right|_{L^2}^2 +\left|\overline{f}^{n+1}\Lambda\right|_{L^{\frac32}}^2 +\left|\overline{f}^{n+1}\langle \bv \rangle\right|_{L^1}^2.
\]
Combining \eqref{eq-app-udif-squashort}, \eqref{eq-app-kcswt-dif-frac-gronshort}, and \eqref{eq-app-kcswt-dif-lonesqu-gron} , we obtain
\begin{equation}\label{eq-app-dif-gron}
  \begin{aligned}
    &\frac{d}{dt}F^{n+1} +\left|\nabla\overline{\bu}^{n+1}\right|_{L^2}^2\\
    \le& \bigg(C +C|f^{n+1}|_{L_{\omega}^2} +C|\bu^n|_{L^{\infty}} +C|f^n|_{H_{\omega}^1}^2 +C\left|\nabla{\bu}^{n+1}\right|_{L^2}^2 \bigg) F^{n+1} +\frac14 \left|\nabla\overline{\bu}^n\right|_{L^2}^2.
  \end{aligned}
\end{equation}
Solving the above Gronwall inequality in $[0, T_0]$ $(0<T_0\le T_*)$, we obtain
\begin{equation}\label{eq-app-dif-est}
  \sup_{0\le t\le T_0}F^{n+1}(t) +\int_0^{T_0}\left|\nabla\overline{\bu}^{n+1}(t)\right|_{L^2}^2 dt
  \le \frac{A(T_0)}{4} \int_0^{T_0}\left|\nabla\overline{\bu}^n(t)\right|_{L^2}^2 dt,
\end{equation}
where $A(T_0)$ is given by
\[
 A(T_0):=\exp\left(\int_0^{T_0} \Big( C +C|f^{n+1}|_{L_{\omega}^2} +C|\bu^n|_{L^{\infty}} +C|f^n|_{H_{\omega}^1}^2 +C\left|\nabla{\bu}^{n+1}\right|_{L^2}^2 \Big) dt \right).
\]
Using the uniform bound on the approximate solutions, we take $T_0$ suitably small, so that
\[
 \exp\left(\int_0^{T_0} \Big( C +C|f^{n+1}|_{L_{\omega}^2} +C|\bu^n|_{L^{\infty}} +C|f^n|_{H_{\omega}^1}^2 +C\left|\nabla{\bu}^{n+1}\right|_{L^2}^2 \Big) dt \right) \le 2.
\]
Thus, we have
\begin{equation}\label{eq-app-dif-est-tzro}
  \sup_{0\le t\le T_0}F^{n+1}(t) +\int_0^{T_0}\left|\nabla\overline{\bu}^{n+1}(t)\right|_{L^2}^2 dt
  \le \frac{1}{2} \int_0^{T_0}\left|\nabla\overline{\bu}^n(t)\right|_{L^2}^2 dt.
\end{equation}
Summing \eqref{eq-app-dif-est-tzro} over all $n\in \bbn$ gives
\begin{equation}\label{eq-app-dif-est-sum}
  \sup_{0\le t\le T_0}\sum_{n=2}^{\infty}F^n(t) +\frac{1}{2}\sum_{n=2}^{\infty} \int_0^{T_0}\left|\nabla\overline{\bu}^n(t)\right|_{L^2}^2 dt
  \le \frac{1}{2} \int_0^{T_0}\left|\nabla\overline{\bu}^1(t)\right|_{L^2}^2 dt.
\end{equation}
We deduce from \eqref{eq-app-dif-est-sum} that there exists $(f, \bu)$ such that
\begin{equation}\label{eq-app-conver-lowoder}
  \begin{aligned}
    &f^n \to f, \quad\text{in $C([0, T_0]; L^1)$, as $n \to \infty$};\\
    &\bu^n \to \bu, \quad \text{in $C(0, T_0; L^2)$, as $n \to \infty$};\\
    &\bu^n \to \bu, \quad \text{in $L^2(0, T_0; D^1)$, as $n \to \infty$}.
  \end{aligned}
\end{equation}
From \eqref{eq-app-conver-lowoder}, it is easy to show that $(f, \bu)$ verifies \eqref{eq-cs-s} in the sense of distributions.
\vskip 3mm
\noindent \textbf{Continuity in Time}
\vskip 3mm
By induction, we know \eqref{eq-appro-f-norm} and \eqref{eq-appro-induassunext} hold for all $n\in \bbn$. Using uniqueness of the weak limit, we deduce by \eqref{eq-app-conver-lowoder} that
\begin{equation}\label{eq-app-conver-wek}
  \begin{aligned}
    f^n &\rightharpoonup f, \quad\text{weakly-$\star$ in $L^{\infty}(0, T_0; H^1_{\omega})$, as $n \to \infty$};\\
    \bu^n &\rightharpoonup \bu, \quad \text{weakly-$\star$ in $L^{\infty}(0, T_0; H^2)$, as $n \to \infty$};\\
    \bu^n_t &\rightharpoonup \bu_t, \quad \text{weakly in $L^2(0, T_0; H^1)$, as $n \to \infty$};\\
    \bu^n &\rightharpoonup \bu, \quad \text{weakly in $L^2(0, T_0; D^{2,q})$, as $n \to \infty$}.
  \end{aligned}
\end{equation}
It follows from \eqref{eq-app-conver-lowoder} and \eqref{eq-app-conver-wek} that
\begin{equation}\label{eq-app-conti-wek}
  \begin{aligned}
    &\bu_t \in L^2(0, T_0; H^1),
    \quad \bu \in L^2(0, T_0; D^{2,q}),\\
    &\bu \in C([0, T_0]; H^1)\cap C([0, T_0]; H^2-W),
  \end{aligned}
\end{equation}
where $C([0, T_0]; H^2-W)$ means continuity in $[0, T_0]$ with respect to the weak topology in $H^2$.
Using the regularity of $\bu$, we can also demonstrate that
\begin{equation}\label{eq-conti-fwtsobo}
 f \in C([0, T_0]; H^1_{\omega})
\end{equation}
by the same proof as in [\cite{jin2019local}, Proposition 2.1]. From $\eqref{eq-cs-s}_2$, we infer that $\bu_{tt} \in L^2(0, T_0; H^{-1})$. This together with $\eqref{eq-app-conti-wek}_1$ gives
\begin{equation}\label{eq-conti-udift}
  \bu_t  \in C([0, T_0]; L^2).
\end{equation}
Project $\eqref{eq-cs-s}_2$ on the divergence-free fields. We obtain
\begin{equation}\label{eq-cs-s-df}
  \Delta \bu =\bu_t-\mathcal P \int_{\bbr^3} f(\bv-\bu) d\bv.
\end{equation}
Using elliptic estimates on \eqref{eq-cs-s-df}, and Lemma \ref{lem-heim-deco}, we deduce that for all $t_1, t_2 \in [0, T_0]$
\begin{equation}\label{eq-conti-usec}
  \begin{aligned}
    &\left|\bu(t_2)-\bu(t_1)\right|_{D^2}\\ \le& C\left|\bu_t(t_2)-\bu_t(t_1)\right|_{L^2}\\ &+C\left|\int_{\bbr^3}f(t_2)\left(\bv-\bu(t_2)\right)d\bv-\int_{\bbr^3}f(t_1) \left(\bv-\bu(t_1)\right)d\bv\right|_{L^2}\\
    \le&  C\left|\bu_t(t_2)-\bu_t(t_1)\right|_{L^2} +C\left|\int_{\bbr^3}\left[f(t_2)-f(t_1)\right] \bv d\bv\right|_{L^2}\\&+ C\left|\int_{\bbr^3}\left[f(t_2)-f(t_1)\right] d\bv \bu(t_1)\right|_{L^2} + C\left|\int_{\bbr^3}f(t_2) d\bv \left[\bu(t_2)-\bu(t_1)\right] \right|_{L^2}\\
    \le& C\left|\bu_t(t_2)-\bu_t(t_1)\right|_{L^2} +C\left( 1+ \left|\bu(t_1)\right|_{\infty}\right)\left|f(t_2)-f(t_1)\right|_{L_{\omega}^2}\\ &+C\left|f(t_2)\right|_{H_{\omega}^1}\left|\bu(t_2)-\bu(t_1)\right|_{D^1}.
  \end{aligned}
\end{equation}
From $\eqref{eq-app-conti-wek}_2$, \eqref{eq-conti-fwtsobo} and \eqref{eq-conti-udift}, we know
\begin{equation}\label{eq-conti-gradutwo}
  \bu \in C([0, T_0]; D^2).
\end{equation}
By virtue of  \eqref{eq-app-conti-wek}, it is easy to find that
\[
 \bu \in C([0, T_0]; H^2)\cap L^2(0, T_0; D^{2,q}).
\]
Therefore, $(f,\bu)$ is the desired strong solution in the sense of Definition \ref{def-stro}. The uniqueness of strong solutions can be proved in the same way as in the derivation of \eqref{eq-app-dif-gron}.
This completes the proof. $\hfill \square$
%%%%%%%%%%%%%%%%%%%%%%%%%%%%%%%%%%%%%%%%%%%%%%%%%%%%%%%%%%%%%%%%%%%%%%%%%%%%%%%%%%%%%%%%%%%%%%%%%%%%%%%%%%%%%%
%
%                      Sect.4 A Priori Estimates
%
%%%%%%%%%%%%%%%%%%%%%%%%%%%%%%%%%%%%%%%%%%%%%%%%%%%%%%%%%%%%%%%%%%%%%%%%%%%%%%%%%%%%%%%%%%%%%%%%%%%%%%%%%%%%%%
\section{A Priori Estimates}\label{sec-apriori}
\setcounter{equation}{0}
In this section, we derive some a priori estimates on the coupled model. Define the energy of the system as
\[
 E(t):=\frac12 \int_{\bbr^3} \bu^2(t,\bx) d\bx +\frac12 \int_{\bbr^6} f(t,\bx,\bv)\bv^2 d\bx d\bv,
\]
and the initial energy $E_0:=E(0)$.
\begin{lemma}\label{lm-cs-s-emapriori}
Under the conditions in Theorem \ref{thm-exist}, if $(f,\bu)$ is a classical solution to \eqref{eq-cs-s}-\eqref{eq-sys-inidata}, then it holds for all $T \in (0, \infty)$ that
\[
 \begin{aligned}
   (i)&\ |f(T)|_{L^1}= |f_0|_{L^1};\\
   (ii)&\ f\ge0 \ \text{and} \ |f(T)|_{L^{\infty}}\le |f_0|_{L^{\infty}}\exp\big(CT\big),\quad \text{where $C:=C(\varphi, f_0)$};\\
   (iii)&\ E(T)+\frac12\int_0^T \int_{\bbr^6}\int_{\bbr^6}\varphi(|\bx-\by|)f(t,\by,\bv^*)f(t,\bx,\bv)(\bv^*-\bv)^2d\by d\bv^* d\bx d\bv dt\\
   & \qquad + \int_0^T \left|\nabla \bu(t) \right|_{L^2}^2 d t +\int_0^T \int_{\bbr^6} f(t,\bx,\bv)(\bu-\bv)^2 d\bx d\bv dt=E_0.
 \end{aligned}
\]
\end{lemma}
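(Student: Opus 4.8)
The plan is to obtain the three assertions by elementary weighted energy manipulations on \eqref{eq-cs-s}, all of which are legitimate for a classical solution because the $\bv$-support of $f(t,\cdot,\cdot)$ stays compact in time (Proposition \ref{prop-kine-cs-wp}(i)) and $f$, $\bu$ decay in $\bx$, so that every integration by parts produces no boundary term. For (i) I would integrate $\eqref{eq-cs-s}_1$ over $\bbr^3\times\bbr^3$: the convective term $\bv\cdot\nabla_{\bx}f$ is a pure $\bx$-divergence and $\nabla_{\bv}\cdot(L[f]f+(\bu-\bv)f)$ is a pure $\bv$-divergence, so both integrate to zero, giving $\frac{d}{dt}\int_{\bbr^6}f\,d\bx d\bv=0$ and hence $|f(T)|_{L^1}=|f_0|_{L^1}$.

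For (ii) I would rewrite $\eqref{eq-cs-s}_1$ in non-divergence form. Since $\nabla_{\bv}\cdot L[f]=-3a$ and $\nabla_{\bv}\cdot(\bu-\bv)=-3$, the equation becomes $f_t+\bv\cdot\nabla_{\bx}f+(L[f]+\bu-\bv)\cdot\nabla_{\bv}f=3(a+1)f$. Integrating along the characteristics $\dot{\bx}=\bv$, $\dot{\bv}=L[f]+\bu-\bv$ yields the representation $f(t,\bx,\bv)=f_0(\bx_0,\bv_0)\exp\!\big(3\int_0^t(a+1)\,ds\big)$ at the transported point. The exponential factor is a positive real number, so $f$ inherits the sign of $f_0$ and $f\ge0$; then $0\le a=\int_{\bbr^6}\varphi(|\bx-\by|)f(t,\by,\bv^*)\,d\by d\bv^*\le\int_{\bbr^6}f=|f_0|_{L^1}$ by (i) and $|\varphi|\le1$, whence the exponent is bounded by $CT$ with $C=3(1+|f_0|_{L^1})=C(\varphi,f_0)$. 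Since the characteristic flow is a bijection of $\bbr^6$, taking the supremum gives $|f(T)|_{L^\infty}\le|f_0|_{L^\infty}e^{CT}$.

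For (iii) I would combine two balances. Multiplying $\eqref{eq-cs-s}_1$ by $\tfrac12\bv^2$ and integrating over $\bbr^6$: the transport term vanishes as an $\bx$-divergence, and integration by parts in $\bv$ gives $\frac{d}{dt}\tfrac12\int_{\bbr^6}f\bv^2=\int_{\bbr^6}(L[f]+\bu-\bv)\cdot\bv\,f$. Symmetrizing the alignment contribution under the relabeling $(\bx,\bv)\leftrightarrow(\by,\bv^*)$, which leaves $\varphi(|\bx-\by|)$ invariant, converts $\int_{\bbr^6}L[f]\cdot\bv\,f$ into $-\tfrac12\int_{\bbr^6}\int_{\bbr^6}\varphi(|\bx-\by|)f(t,\bx,\bv)f(t,\by,\bv^*)(\bv^*-\bv)^2$. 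Separately, multiplying $\eqref{eq-cs-s}_2$ by $\bu$ and integrating over $\bbr^3$ kills the pressure term by $\nabla\cdot\bu=0$ and yields $\frac{d}{dt}\tfrac12\int_{\bbr^3}\bu^2+\int_{\bbr^3}|\nabla\bu|^2=\int_{\bbr^6}f(\bv-\bu)\cdot\bu$. Adding the two identities and using the pointwise identity $(\bu-\bv)\cdot\bv+(\bv-\bu)\cdot\bu=-(\bu-\bv)^2$, the coupling terms collapse into $-\int_{\bbr^6}f(\bu-\bv)^2$, so that $\frac{d}{dt}E+\int_{\bbr^3}|\nabla\bu|^2+\tfrac12\int_{\bbr^6}\int_{\bbr^6}\varphi ff(\bv^*-\bv)^2+\int_{\bbr^6}f(\bu-\bv)^2=0$; integrating over $[0,T]$ gives (iii).

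I expect the only real point of care to be the bookkeeping in (iii): the exact cancellation of the particle–fluid coupling terms forces the choice of $\tfrac12\bv^2$ (and no other weight) as the multiplier for the kinetic equation, and it rests on the algebraic identity above together with the skew-symmetrization of the $L[f]$ term. The integrations by parts in the unbounded domain are not a genuine obstacle here, since the propagated compactness of the $\bv$-support and the spatial decay inherent in the regularity of a classical solution make all boundary contributions vanish; likewise the well-posedness of the characteristic ODEs used in (ii) is ensured by the assumed smoothness of $f$ and $\bu$.
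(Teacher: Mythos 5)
Your proposal is correct and follows essentially the same route as the paper: mass conservation from integrating the kinetic equation, positivity and the $L^\infty$ bound from the exponential representation along characteristics (using $\nabla_{\bv}\cdot L[f]=-3a$, $\nabla_{\bv}\cdot(\bu-\bv)=-3$, $a\le|f_0|_{L^1}$), and the energy identity from the $\tfrac12\bv^2$-weighted balance combined with the $L^2$ balance for $\bu$, with the $L[f]$ term symmetrized and the coupling terms collapsing to $-\int f(\bu-\bv)^2$. No gaps.
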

\begin{proof}
(i)\ From $f_0 \in H^1_{\omega}$, we deduce that
\begin{equation} \label{eq-ini-f-lonenm}
 \begin{aligned}
 |f_0|_{L^1}&=\int_{\bbr^6}f_0(\bx,\bv)\omega^{\frac12}(\bx,\bv)\omega^{-\frac12}(\bx,\bv)d\bx d\bv\\
   &\le |\omega^{-\frac12}|_{L^2}|f_0|_{L^2_{\omega}}\le C|f_0|_{L^2_{\omega}}.
 \end{aligned}
\end{equation}
Integrating $\eqref{eq-cs-s}_1$ over $[0,T]\times \bbr^3 \times \bbr^3$ gives
\begin{equation} \label{eq-kin-conser-mass}
 |f(T)|_{L^1}=|f_0|_{L^1}.
\end{equation}
(ii)\ Denote by $(X(t;\bx_0,\bv_0),V(t;\bx_0,\bv_0))$ the characteristic issuing from $(\bx_0,\bv_0)$. It verifies
\begin{equation} \label{eq-charac}
\begin{dcases}
  \frac{d X}{d t}=V, \\
  \frac{d V}{d t}=\int_{\bbr^{6}} \varphi(|X-\by|)f(t, \by,\bv^*)(\bv^*-V)d \by d \bv^*+\bu(t,X)-V.
\end{dcases}
\end{equation}
\begin{equation} \label{eq-charac-ini}
 X(0;\bx_0,\bv_0)=\bx_0, \qquad V(0;\bx_0,\bv_0)=\bv_0.
\end{equation}
Recall that
\[
 a(t,\bx)=\int_{\bbr^{6}} \varphi(|\bx-\by|)f(t, \by,\bv^*) d \by d \bv^*,
\]
\[
 \mathbf{b} (t,\bx)=\int_{\bbr^{6}} \varphi(|\bx-\by|)f(t, \by,\bv^*) \bv^* d \by d \bv^*.
\]
Solving the equation \eqref{eq-kine-cs} by the method of characteristics gives
\begin{equation} \label{eq-kin-cs-positive}
 f(t,X(t;\bx_0,\bv_0),V(t;\bx_0,\bv_0))=f_0(\bx_0,\bv_0)\exp \left(3 \int_0^t [1+a(\tau,X(\tau))] d \tau \right)\ge 0.
\end{equation}
From \eqref{eq-kin-conser-mass}, \eqref{eq-kin-cs-positive} and the initial condition $f_0(\bx, \bv)\in L^{\infty}(\bbr^3\times\bbr^3)$, we deduce that
\begin{equation} \label{eq-kin-cs-linfty}
  |f(T)|_{L^{\infty}}\le |f_0|_{L^{\infty}}\exp\big(CT\big),\quad \text{where $C:=C(\varphi, f_0)$}.
\end{equation}
(iii)\ Multiplying $\eqref{eq-cs-s}_1$ by $\frac12 \bv^2$, and integrating the resulting equation over $\bbr^3\times\bbr^3$ lead to
\begin{equation}\label{eq-cs-ener-dt}
  \begin{gathered}
    \frac{d}{dt}\int_{\bbr^6}\frac12 f\bv^2d\bx d\bv +\frac12 \int_{\bbr^6}\int_{\bbr^6}\varphi(|\bx-\by|)f(t,\by,\bv^*)f(t,\bx,\bv)(\bv^*-\bv)^2d\by d\bv^* d\bx d\bv \\
    =\int_{\bbr^6}f\bv\cdot(\bu-\bv)d\bx d\bv.
  \end{gathered}
\end{equation}
Multiplying $\eqref{eq-cs-s}_2$ by $\bu$, and integrating the resulting equation over $\bbr^3$ give
\begin{equation}\label{eq-s-ener-dt}
    \frac12 \frac{d}{dt}\int_{\bbr^3} \bu^2 d\bx +|\nabla \bu|_{L^2}^2
    =\int_{\bbr^6}f\bu\cdot(\bv-\bu)d\bx d\bv.
\end{equation}
Adding \eqref{eq-cs-ener-dt} to \eqref{eq-s-ener-dt}, and integrating the resulting equation over $[0,T]$, result in Lemma \ref{lm-cs-s-emapriori}(iii). This completes the proof.
\end{proof}
In order to derive the key estimate on $\int_0^T |\bu(t)|_{L^{\infty}}dt$, we need the following lemma.
\begin{lemma}\label{lm-cs-s-fvwtapriori}
Under the conditions in Theorem \ref{thm-exist}, if $f(t, \bx, \bv)$ is a classical solution to \eqref{eq-cs-s}-\eqref{eq-sys-inidata}, then it holds for all $T \in (0, \infty)$ that
\[
 \left|f(T) \langle \bv \rangle^3\right|_{L^2}\le C\Big(1+T^{\frac32}\Big) \exp \big(CT\big), \quad \text{where $C:=C(\varphi, f_0, E_0)$}.
\]
\end{lemma}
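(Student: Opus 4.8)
The plan is to perform a weighted $L^2$ energy estimate on the kinetic equation $\eqref{eq-cs-s}_1$ with the weight $\langle \bv \rangle^6$, i.e. to estimate $\frac{d}{dt}\left|f\langle \bv \rangle^3\right|_{L^2}^2$. Multiplying $\eqref{eq-cs-s}_1$ by $f\langle \bv \rangle^6$ and integrating over $\bbr^3\times\bbr^3$, the transport term $\bv\cdot\nabla_{\bx}f$ contributes nothing (it is a perfect $\bx$-divergence and the weight is $\bx$-independent). The $\bv$-divergence terms split: the damping term $-\nabla_{\bv}\cdot(\bv f)$ produces, after integration by parts, a term of the form $\int f^2\langle \bv \rangle^6$ plus a term involving $\bv\cdot\nabla_{\bv}\langle \bv \rangle^6 \sim \langle \bv \rangle^6$, both absorbable into the Gronwall factor; the forcing terms $-\nabla_{\bv}\cdot\big(L[f]f\big)$ and $-\nabla_{\bv}\cdot(\bu f)$ produce, after integration by parts, terms like $\int L[f]\cdot\nabla_{\bv}\langle \bv \rangle^3\, f\langle \bv \rangle^3$ and $\int \bu\cdot\nabla_{\bv}\langle \bv \rangle^3\, f\langle \bv \rangle^3$, plus divergence contributions $\int(\nabla_{\bv}\cdot L[f])f^2\langle \bv \rangle^6$ (note $\nabla_{\bv}\cdot L[f] = -3a(t,\bx)$ is bounded by $|f_0|_{L^1}$) and $\int(\nabla\cdot\bu)f^2\langle \bv \rangle^6 = 0$ by incompressibility.

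The key quantities to control are $|a(t,\bx)|_{L^\infty}$, $|\mathbf{b}(t,\bx)|_{L^\infty}$ (which bound $|L[f]|$ pointwise, since $|L[f]|\le a|\bv| + |\mathbf b| \lesssim (1+|\mathbf b|_{L^\infty})\langle\bv\rangle$ using $|a|\le|f_0|_{L^1}$), and crucially $\int_0^T|\bu(t)|_{L^\infty}dt$. From Lemma \ref{lm-cs-s-emapriori}, $|a|_{L^\infty}\le|f_0|_{L^1}=C(f_0)$ and $|\mathbf b|_{L^\infty}\le \big(\int f\bv^2\big)^{1/2}\big(\int f\big)^{1/2}\le C(f_0,E_0)$ by Cauchy–Schwarz and energy conservation. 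For the fluid term, I would use the energy estimate $\int_0^T|\nabla\bu|_{L^2}^2\,dt\le E_0$ together with $|\bu|_{L^2}^2\le 2E_0$; these do not directly bound $|\bu|_{L^\infty}$, but the $\bu$-forcing term in the energy identity can be handled by Young's inequality after pairing $\int\bu\cdot\nabla_{\bv}\langle\bv\rangle^3 f\langle\bv\rangle^3\,d\bx d\bv \le |\bu|_{L^6}\,|f\langle\bv\rangle^3|_{L^2}\,|\langle\bv\rangle^2 \mathbf{1}_{\mathrm{supp}}|$-type bound — wait, more carefully, one uses $|\nabla_{\bv}\langle\bv\rangle^3|\lesssim\langle\bv\rangle^2$ and bounds $\int|\bu| f\langle\bv\rangle^5\,d\bx d\bv\le|\bu|_{L^6}|f\langle\bv\rangle^5|_{L^{6/5}}$, then interpolates $|f\langle\bv\rangle^5|_{L^{6/5}}$ against $|f\langle\bv\rangle^3|_{L^2}$ and lower-order conserved quantities, finally absorbing $|\bu|_{L^6}\le C|\nabla\bu|_{L^2}$ (Sobolev) into $\frac{d}{dt}$-integrable quantities via Young. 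Assembling, one arrives at a differential inequality
\[
 \frac{d}{dt}\left|f\langle\bv\rangle^3\right|_{L^2}^2 \le C\Big(1+|\nabla\bu(t)|_{L^2}^2\Big)\left|f\langle\bv\rangle^3\right|_{L^2}^2 + C\Big(1+|\nabla\bu(t)|_{L^2}^2\Big),
\]
with $C:=C(\varphi,f_0,E_0)$.

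Applying Gronwall's inequality and using $\int_0^T|\nabla\bu(t)|_{L^2}^2\,dt\le E_0$, the exponential factor is bounded by $\exp(C(T+E_0))=C\exp(CT)$, and the inhomogeneous term contributes $\int_0^T C(1+|\nabla\bu|_{L^2}^2)\,dt\le C(1+T)$. This yields $\left|f(T)\langle\bv\rangle^3\right|_{L^2}^2\le C(1+T)\exp(CT)$; however the claimed bound is $C(1+T^{3/2})\exp(CT)$, which is stronger near $T=0$ only in lower-order powers and is anyway implied once one tracks the $T$-dependence slightly more loosely (or one keeps the polynomial prefactor as $(1+T^3)$ inside before taking square roots — the stated $T^{3/2}$ after the square root is consistent with a $(1+T^3)$ bound on the square, arising if one of the source terms carries an extra $\int_0^T 1\,d\tau$ integrated twice). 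The main obstacle is the fluid forcing term: since no $L^\infty$ or even $L^q$ ($q>2$) control of $\bu$ is available at this stage, one must route everything through $|\nabla\bu|_{L^2}$ via Sobolev embedding $\dot H^1\hookrightarrow L^6$ and an interpolation of the velocity moments of $f$ that stays within the range controlled by $|f\langle\bv\rangle^3|_{L^2}$, $|f|_{L^1}$, and $\int f\bv^2$ — getting the exponents to close without needing higher moments or higher integrability of $\bu$ is the delicate point.
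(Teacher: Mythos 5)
Your overall strategy --- a $\langle\bv\rangle^6$-weighted $L^2$ energy estimate, using the mass and energy bounds of Lemma \ref{lm-cs-s-emapriori} to control $a$, $\mathbf b$, and routing the fluid forcing through $|\bu|_{L^6}\lesssim|\nabla\bu|_{L^2}$ --- is exactly the paper's approach. But there is a genuine gap in how you close the fluid-forcing term, and it is precisely the step you flag as ``the delicate point'' without resolving.

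After integration by parts, the fluid term is (up to constants) $\int_{\bbr^6}|\bu|\,f^2\,\langle\bv\rangle^5\,d\bx\,d\bv$ (note $f^2$, not $f$ as you wrote in one place). You propose to bound this so as to land in a \emph{linear} Gronwall inequality
$\frac{d}{dt}|f\langle\bv\rangle^3|_{L^2}^2 \le C(1+|\nabla\bu|_{L^2}^2)\big(|f\langle\bv\rangle^3|_{L^2}^2+1\big)$, but the H\"older split you sketch via $|\bu|_{L^6}$ and moments of $f$ does not deliver a bound that is affine in $|f\langle\bv\rangle^3|_{L^2}^2$. The paper's actual estimate is
\[
\int_{\bbr^6}|\bu|\,f^2\langle\bv\rangle^5\,d\bx\,d\bv
\;\le\; |\bu|_{L^6}\,|f\langle\bv\rangle^2|_{L^3}\,|f\langle\bv\rangle^3|_{L^2}
\;\le\; C|\nabla\bu|_{L^2}\,|f|_{L^\infty}^{1/3}\,|f\langle\bv\rangle^3|_{L^2}^{5/3},
\]
using the H\"older triple $(6,3,2)$ and the interpolation $|f\langle\bv\rangle^2|_{L^3}\le |f|_{L^\infty}^{1/3}|f\langle\bv\rangle^3|_{L^2}^{2/3}$. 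This requires the $L^\infty$ bound $|f(t)|_{L^\infty}\le|f_0|_{L^\infty}e^{Ct}$ from Lemma \ref{lm-cs-s-emapriori}(ii), which you do not invoke. If instead you try to apply Young's inequality to your proposed product $|\bu|_{L^6}\,|f\langle\bv\rangle^2|_{L^3}\,|f\langle\bv\rangle^3|_{L^2}$ to absorb $|\nabla\bu|_{L^2}^2$, the other factor becomes $|f|_{L^\infty}^{2/3}|f\langle\bv\rangle^3|_{L^2}^{10/3}$, which is superquadratic in the unknown and does not give a linear Gronwall.

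The resulting differential inequality is therefore of Bernoulli type,
\[
\frac{d}{dt}|f\langle\bv\rangle^3|_{L^2}^2 \le C\,|f\langle\bv\rangle^3|_{L^2}^2 + C\,e^{Ct}\,|\nabla\bu(t)|_{L^2}\,|f\langle\bv\rangle^3|_{L^2}^{5/3},
\]
and the $T^{3/2}$ factor in the statement is \emph{not} a cosmetic artifact of ``an extra $\int_0^T 1\,d\tau$,'' as you speculate: it arises structurally. Setting $y=|f\langle\bv\rangle^3|_{L^2}^2$ and $z=y^{1/6}$ converts the inequality to a linear one in $z$, whose integration yields $z(T)\lesssim e^{CT}\big(1+\int_0^T e^{Cs}|\nabla\bu(s)|_{L^2}\,ds\big)\lesssim e^{CT}(1+T^{1/2}E_0^{1/2})$; since $|f\langle\bv\rangle^3|_{L^2}=z^3$, cubing the $T^{1/2}$ gives the $T^{3/2}$. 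Without the interpolation through $|f|_{L^\infty}$ and the Bernoulli substitution, the argument does not close, so the proposal as written has a real gap.
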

\begin{proof}
Multiplying $\eqref{eq-cs-s}_1$ by $2f\langle \bv \rangle^6$, we obtain
\begin{equation}\label{eq-zerowt-dif}
  \begin{gathered}
   \frac{\partial}{\partial t}(f^2\langle \bv \rangle^6)+\bv \cdot \nabla_{\bx}(f^2\langle \bv \rangle^6)+\nabla_{\bv}\cdot \Big(L[f]f^2\langle \bv \rangle^6+(\bu-\bv)f^2\langle \bv \rangle^6 \Big) \\=L[f]\cdot \nabla_{\bv}\langle \bv \rangle^6 f^2 -\nabla_{\bv}\cdot L[f] f^2\langle \bv \rangle^6 +(\bu-\bv)\cdot \nabla_{\bv}\langle \bv \rangle^6 f^2 +3 f^2\langle \bv \rangle^6.
  \end{gathered}
\end{equation}
Integrating \eqref{eq-zerowt-dif} over $\bbr^3 \times \bbr^3$ yields
\begin{equation}\label{eq-fvwt-gron}
  \begin{aligned}
    \frac{d}{dt}\left|f\langle \bv \rangle^3\right|_{L^2}^2 =&\int_{\bbr^6} \Big[ -\nabla_{\bv}\cdot L[f] |f\langle \bv \rangle^3|^2 +3|f\langle \bv \rangle^3|^2 \Big] d\bx d\bv \\
    &+\int_{\bbr^6} L[f]\cdot \nabla_{\bv}\langle \bv \rangle^6 f^2  d\bx d\bv \\
    &+\int_{\bbr^6} (\bu-\bv)\cdot \nabla_{\bv}\langle \bv \rangle^6  f^2  d\bx d\bv \\
    =:&I_1+I_2+I_3.
  \end{aligned}
\end{equation}
We estimate each $I_i$ $(i=1, 2, 3)$ as follows.
\begin{equation*}
  \begin{aligned}
    I_1=&\int_{\bbr^6} \Big[ -\nabla_{\bv}\cdot L[f] |f\langle \bv \rangle^3|^2 +3|f\langle \bv \rangle^3|^2 \Big] d\bx d\bv \\
      \le& C(\varphi, f_0) \left|f\langle \bv \rangle^3\right|_{L^2}^2;\\
    I_2=&\int_{\bbr^6} L[f]\cdot \nabla_{\bv}\langle \bv \rangle^6 f^2  d\bx d\bv  \\
      \le& C(\varphi) \left|f \langle \bv \rangle \right|_{L^1} \left|f\langle \bv \rangle^3\right|_{L^2}^2\\
      \le& C(\varphi) \left|f \right|_{L^1}^{\frac12}\left|f \langle \bv \rangle^2 \right|_{L^1}^{\frac12} \left|f\langle \bv \rangle^3\right|_{L^2}^2\\
      \le& C(\varphi, f_0, E_0) \left|f\langle \bv \rangle^3\right|_{L^2}^2;\\
    I_3=&\int_{\bbr^6} (\bu-\bv)\cdot \nabla_{\bv}\langle \bv \rangle^6  f^2  d\bx d\bv \\
       \le& C\left|\nabla \bu \right|_{L^2} \left|f\langle \bv \rangle^2\right|_{L^3} \left|f\langle \bv \rangle^3\right|_{L^2} +C \left|f\langle \bv \rangle^3\right|_{L^2}^2\\
       \le& C\left|\nabla \bu \right|_{L^2} \left|f\right|_{L^{\infty}}^{\frac13} \left|f\langle \bv \rangle^3\right|_{L^2}^{\frac53} +C \left|f\langle \bv \rangle^3\right|_{L^2}^2\\
       \le& C(f_0) \exp\big(C(\varphi, f_0)t \big)\left|\nabla \bu \right|_{L^2} \left|f\langle \bv \rangle^3\right|_{L^2}^{\frac53} +C \left|f\langle \bv \rangle^3\right|_{L^2}^2,\\
  \end{aligned}
\end{equation*}
where in the above estimate, we have used Lemma \ref{lm-cs-s-emapriori}. Substituting the estimates on $I_i$ $(i=1, 2, 3)$ into \eqref{eq-fvwt-gron}, we get
\begin{equation}\label{eq-fvwt-gron-short}
    \frac{d}{dt}\left|f\langle \bv \rangle^3\right|_{L^2}^2 \le C(\varphi, f_0, E_0) \left|f\langle \bv \rangle^3\right|_{L^2}^2
    +C(f_0) \exp\big(C(\varphi, f_0)t \big)\left|\nabla \bu \right|_{L^2} \left|f\langle \bv \rangle^3\right|_{L^2}^{\frac53}.
\end{equation}
Solving the above Gronwall's inequality yields
\[
 \left|f(T) \langle \bv \rangle^3\right|_{L^2}\le C\Big(1+T^{\frac32}\Big) \exp \big(CT\big), \quad \text{where $C:=C(\varphi, f_0, E_0)$}.
\]
This completes the proof.
\end{proof}
We use Lemma \ref{lm-cs-s-fvwtapriori} to estimate $\int_0^T\left|\bu(t)\right|_{L^{\infty}} dt$ for all $T>0$. Then the estimate on $\int_0^T\left|\nabla \bu(t)\right|_{L^{\infty}} dt$ can be obtained by a bootstrap argument.
\begin{lemma}\label{lm-cs-s-graduwoneinftapriori}
Under the conditions in Theorem \ref{thm-exist}, if $(f, \bu)$ is a classical solution to \eqref{eq-cs-s}-\eqref{eq-sys-inidata}, then it holds for all $T \in (0, \infty)$ that
\[
 \int_0^T\left|\bu\right|_{L^{\infty}} dt\le C\Big(1+T^{\frac72}\Big) \exp \big(CT\big), \quad \text{where $C:=C(\varphi, f_0, \bu_0, E_0)$}.
\]
and
\[
 \int_0^T\left|\nabla \bu \right|_{L^{\infty}} dt\le C\big(1+T\big) \exp \Big(C(1+ T^{\frac72})e^{CT}\Big), \quad \text{where $C:=C(q, \varphi, R_0, f_0, \bu_0, E_0)$}.
\]
\end{lemma}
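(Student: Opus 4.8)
The plan is to follow the two-stage scheme announced in the introduction: first bound $\int_0^T|\bu(t)|_{L^\infty}\,dt$, then bootstrap it to the bound on $\int_0^T|\nabla\bu(t)|_{L^\infty}\,dt$.

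\emph{Stage 1.} I would view $\eqref{eq-cs-s}_2$ as the linear non-stationary Stokes system with source $\mathbf{g}:=\bj-\rho\bu$, where $\rho:=\int_{\bbr^3}f\,d\bv$ and $\bj:=\int_{\bbr^3}f\bv\,d\bv$, and apply Proposition~\ref{prop-s} with $p=2$ to get $\int_0^T|\nabla^2\bu|_{L^2}^2\,dt\le C\big(|\bu_0|_{H^2}^2+\int_0^T|\mathbf{g}|_{L^2}^2\,dt\big)$. Cauchy--Schwarz in $\bv$ against $\langle \bv \rangle^{-3}$ (note that $\langle \bv \rangle^{-6}$ is integrable over $\bbr^3$) gives $|\rho(t)|_{L^2}+|\bj(t)|_{L^2}\le C|f(t)\langle \bv \rangle^3|_{L^2}$, while $|\rho\bu|_{L^2}\le|\rho|_{L^2}|\bu|_{L^\infty}$ together with the Gagliardo--Nirenberg inequality $|\bu|_{L^\infty}^2\le C|\nabla\bu|_{L^2}|\nabla^2\bu|_{L^2}$ in $\bbr^3$ turns this into
\[
\int_0^T|\mathbf{g}|_{L^2}^2\,dt\le C\int_0^T|f\langle \bv \rangle^3|_{L^2}^2\,dt+C\sup_{0\le t\le T}|f\langle \bv \rangle^3|_{L^2}^2\Big(\int_0^T|\nabla\bu|_{L^2}^2\,dt\Big)^{1/2}\Big(\int_0^T|\nabla^2\bu|_{L^2}^2\,dt\Big)^{1/2}.
\]
Since $\int_0^T|\nabla\bu|_{L^2}^2\,dt\le E_0$ by Lemma~\ref{lm-cs-s-emapriori}(iii), a Young inequality absorbs the last square root into the left-hand side of the Stokes estimate, and Lemma~\ref{lm-cs-s-fvwtapriori} then yields $\int_0^T|\nabla^2\bu|_{L^2}^2\,dt\le C(1+T^6)e^{CT}$. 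Combining $|\bu|_{L^\infty}\le C|\bu|_{H^2}$, the bound $\int_0^T|\bu|_{L^2}^2\,dt\le 2E_0T$ (again from Lemma~\ref{lm-cs-s-emapriori}(iii)) and Cauchy--Schwarz in $t$ then gives the first assertion.

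\emph{Stage 2.} Stage 1 provides three ingredients: the bound on $\int_0^T|\bu|_{L^\infty}\,dt$; the bound $\int_0^T|\bu|_{L^\infty}^2\,dt\le\int_0^T|\bu|_{H^2}^2\,dt\le C(1+T^6)e^{CT}$; and, via $|\mathbf{b}(t)|_{L^\infty}\le|f\langle \bv \rangle|_{L^1}\le C(f_0,E_0)$ and Proposition~\ref{prop-kine-cs-wp}(i), the bound $R(t)\le C(1+T^{7/2})e^{CT}$ on $[0,T]$. Next I would run a weighted $L^q$ energy estimate on $\eqref{eq-cs-s}_1$: multiplying by $qf^{q-1}\langle \bv \rangle^{kq}$ and integrating, the $\bx$-transport term vanishes, $\nabla_{\bv}\cdot\bu=0$, and $\bu$ survives only in the term $(\bu-\bv)\cdot\nabla_{\bv}\langle \bv \rangle^{kq}$, so $\frac{d}{dt}|f\langle \bv \rangle^k|_{L^q}^q\le C(1+|\bu|_{L^\infty})|f\langle \bv \rangle^k|_{L^q}^q$; Gronwall and Stage 1 then give $|f(t)\langle \bv \rangle^k|_{L^q}\le C\exp\big(C(1+t^{7/2})e^{Ct}\big)$ for each $k$, the initial norm being finite because $\text{supp}_{\bv}f_0\subseteq B(R_0)$ and $f_0\in L^\infty$. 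Hölder in $\bv$ bounds $|\rho(t)|_{L^q}+|\bj(t)|_{L^q}$ by $C|f(t)\langle \bv \rangle^k|_{L^q}$ for $k$ large enough, so $|\mathbf{g}(t)|_{L^q}\le|\bj(t)|_{L^q}+|\rho(t)|_{L^q}|\bu(t)|_{L^\infty}$ is square-integrable on $[0,T]$ once the bound on $\int_0^T|\bu|_{L^\infty}^2\,dt$ is used. Proposition~\ref{prop-s} with $p=q$ then controls $\int_0^T|\nabla^2\bu|_{L^q}^2\,dt$, and since $|\nabla\bu|_{L^\infty}\le C(|\nabla\bu|_{L^2}+|\nabla^2\bu|_{L^2}+|\nabla^2\bu|_{L^q})$ for $q>3$ (Sobolev embedding and interpolation), Cauchy--Schwarz in $t$ finishes the second assertion. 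The $\bv$-support bound $R(t)$, with $|f(t)|_{L^\infty}\le|f_0|_{L^\infty}e^{Ct}$ from Lemma~\ref{lm-cs-s-emapriori}(ii), also yields cruder but self-contained $L^q$ bounds on $\rho$ and $\bj$ directly.

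The genuine obstacle is the circular coupling concealed in $\mathbf{g}=\bj-\rho\bu$: controlling $\rho$ and $\bj$ in $L^q$ with $q>2$ a priori needs either $|f|_{H_{\omega}^1}$ --- which, by Proposition~\ref{prop-kine-cs-wp}(ii), is precisely what one invokes $\int_0^T|\bu|_{W^{1,\infty}}$ for --- or the $\bv$-support bound $R(t)$, which, by Proposition~\ref{prop-kine-cs-wp}(i), again requires $\int_0^t|\bu|_{L^\infty}$. The two-stage scheme breaks this loop: Stage 1 relies \emph{only} on the weighted second-moment bound of Lemma~\ref{lm-cs-s-fvwtapriori}, whose proof never calls on $|f|_{H_{\omega}^1}$, and the $\bu$ hidden inside $\mathbf{g}$ is neutralised purely by Young-absorbing the ensuing factor of $|\nabla^2\bu|_{L^2(0,T;L^2)}$ against the Stokes estimate; once $\int_0^T|\bu|_{L^\infty}\,dt$ is secured, Stage 2 has $R(t)$ --- hence all weighted $L^q$ moments of $f$ --- at its disposal and proceeds by a routine bootstrap.
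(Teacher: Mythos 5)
Your proposal is correct and follows the paper's proof almost step for step: Stage~1 closes the $L^2$-in-time Stokes estimate by Young-absorbing $|\nabla^2\bu|_{L^2}$ and invoking Lemma~\ref{lm-cs-s-fvwtapriori}, and Stage~2 propagates $\langle\bv\rangle^k$-weighted $L^q$ moments of $f$ by a Gronwall argument (with the $\bv$-support of $f_0$ guaranteeing a finite initial norm) and feeds them into the $L^q$-in-time Stokes estimate, exactly as in \eqref{eq-fvwtlq-gron}--\eqref{eq-gradulinfty-est}. The one small deviation is how Stage~2 treats the $\rho\bu$ term: the paper repeats the interpolation-plus-Young absorption of $|\nabla^2\bu|_{L^q}$ via $|\bu|_{L^\infty}\le C|\nabla\bu|_{L^2}^{1-\theta_1}|\nabla^2\bu|_{L^q}^{\theta_1}$, whereas you exploit the $L^2$-in-time bound on $|\bu|_{L^\infty}$ already secured through $\int_0^T|\bu|_{H^2}^2\,dt$ in Stage~1 — a slightly cleaner route that avoids a second absorption and delivers the same final bound.
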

\begin{proof}
Using Proposition \ref{prop-s}, we know
\begin{equation}\label{eq-stwo-apriori}
  \int_0^T \left|\bu_t \right|_{L^2}^2 dt+ \int_0^T \left|\nabla^2\bu \right|_{L^2}^2 dt \le C\left(\left|\bu_0\right|_{H^2}^2+\int_0^T \left|\int_{\bbr^3} f (\bv-\bu )d\bv \right|_{L^2}^2 dt\right).
\end{equation}
It is easy to see that
\begin{equation}\label{eq-couptermltwo}
  \begin{aligned}
    \left|\int_{\bbr^3} f (\bv-\bu )d\bv \right|_{L^2}\le& \left|\int_{\bbr^3} f \bv d\bv\right|_{L^2} +\left|\int_{\bbr^3} f  d\bv\right|_{L^2} \left|\bu \right|_{L^{\infty}}\\
    \le& C\left|f\langle \bv \rangle^3\right|_{L^2}+ C\left|f\langle \bv \rangle^3\right|_{L^2} \left|\nabla \bu \right|_{L^2}^{\frac12} \left|\nabla^2 \bu \right|_{L^2}^{\frac12},
  \end{aligned}
\end{equation}
where we have used the inequality
\[
 \left|\int_{\bbr^3}f \langle \bv \rangle d\bv \right|_{L^2} \le \left|f\langle \bv \rangle^3\right|_{L^2}\left|\langle \bv \rangle^{-2}\right|_{L^2}\le C\left|f\langle \bv \rangle^3\right|_{L^2}
\]
and the Sobolev inequality
\begin{equation}\label{eq-sobo-linfty}
 \left|\bu \right|_{L^{\infty}}  \le C \left|\nabla \bu \right|_{L^2}^{\frac12} \left|\nabla^2 \bu \right|_{L^2}^{\frac12} \quad \text{in $\bbr^3$.}
\end{equation}
From \eqref{eq-couptermltwo}, we deduce that
\begin{equation}\label{eq-couptermltwo-est}
  \begin{aligned}
    &C\int_0^T \left|\int_{\bbr^3} f (\bv-\bu )d\bv \right|_{L^2}^2 dt\\
    \le& \int_0^T \left[C\left|f\langle \bv \rangle^3\right|_{L^2}^2+ C\left|f\langle \bv \rangle^3\right|_{L^2}^4 \left|\nabla \bu \right|_{L^2}^2 +\frac12 \left|\nabla^2 \bu \right|_{L^2}^2 \right] dt\\
    \le& CT \sup_{0 \le t \le T} \left|f\langle \bv \rangle^3\right|_{L^2}^2 +C \sup_{0 \le t \le T} \left|f\langle \bv \rangle^3\right|_{L^2}^4 \int_0^T \left|\nabla \bu \right|_{L^2}^2 dt +\frac12 \int_0^T \left|\nabla^2 \bu \right|_{L^2}^2 dt.
  \end{aligned}
\end{equation}
Substituting \eqref{eq-couptermltwo-est} into \eqref{eq-stwo-apriori}, we obtain by Lemma \ref{lm-cs-s-emapriori} and \ref{lm-cs-s-fvwtapriori} that
\begin{equation}\label{eq-gradtwou-ltwo}
  \begin{aligned}
    \int_0^T \left|\nabla^2\bu \right|_{L^2}^2 dt \le& C\left|\bu_0\right|_{H^2}^2+ CT \sup_{0 \le t \le T} \left|f\langle \bv \rangle^3\right|_{L^2}^2 +C \sup_{0 \le t \le T} \left|f\langle \bv \rangle^3\right|_{L^2}^4 \int_0^T \left|\nabla \bu \right|_{L^2}^2 dt\\
    \le& C (1+T^6) \exp\Big(CT \Big), \quad \text{where $C:=C(\varphi, f_0, \bu_0, E_0)$}.
  \end{aligned}
\end{equation}
Using \eqref{eq-sobo-linfty} again, we deduce that
\begin{equation}\label{eq-ulinfty-est}
  \begin{aligned}
    \int_0^T\left|\bu\right|_{L^{\infty}} dt\le& C\int_0^T \left|\nabla \bu \right|_{L^2}^{\frac12} \left|\nabla^2 \bu \right|_{L^2}^{\frac12} dt\\
      \le& C \int_0^T \left(\left|\nabla \bu \right|_{L^2} + \left|\nabla^2 \bu \right|_{L^2}\right) dt\\
      \le& C T^{\frac12} \left[ \left(\int_0^T\left|\nabla\bu\right|_{L^2}^2 dt \right)^{\frac12} + \left(\int_0^T\left|\nabla^2\bu\right|_{L^2}^2 dt \right)^{\frac12} \right]\\
      \le& C\Big(1+T^{\frac72}\Big) \exp \big(CT\big), \quad \text{where $C:=C(\varphi, f_0, \bu_0, E_0)$}.
  \end{aligned}
\end{equation}
Multiplying $\eqref{eq-cs-s}_1$ by $\langle \bv \rangle^k $ yields
\begin{equation}\label{eq-cs-fvwt}
  \begin{aligned}
    &\frac{\partial}{\partial t} \Big(f \langle \bv \rangle^k \Big)+ \bv \cdot \nabla_{\bx}\Big(f \langle \bv \rangle^k \Big)+\nabla_{\bv} \cdot \Big(L[f] f \langle \bv \rangle^k+(\bu-\bv) f \langle \bv \rangle^k\Big)\\
    =& fL[f]\cdot \nabla_{\bv} \langle \bv \rangle^k+ f (\bu-\bv) \cdot \nabla_{\bv} \langle \bv \rangle^k.
  \end{aligned}
\end{equation}
Multiplying \eqref{eq-cs-fvwt} by $q \Big(f \langle \bv \rangle^k \Big)^{q-1} $, we obtain
\begin{equation}\label{eq-cs-fvwtq}
  \begin{aligned}
    &\frac{\partial}{\partial t} \Big(f \langle \bv \rangle^k \Big)^q+ \bv \cdot \nabla_{\bx}\Big(f \langle \bv \rangle^k \Big)^q+\nabla_{\bv} \cdot \Big(L[f] \Big(f \langle \bv \rangle^k \Big)^{q}+(\bu-\bv) \Big(f \langle \bv \rangle^k \Big)^{q}\Big)\\
    =&-(q-1) \nabla_{\bv} \cdot L[f]\Big(f \langle \bv \rangle^k \Big)^q+ 3(q-1)\Big(f \langle \bv \rangle^k \Big)^q\\
    &+q fL[f]\cdot \nabla_{\bv} \langle \bv \rangle^k \Big(f \langle \bv \rangle^k \Big)^{q-1} + q f (\bu-\bv) \cdot \nabla_{\bv} \langle \bv \rangle^k \Big(f \langle \bv \rangle^k \Big)^{q-1}.
  \end{aligned}
\end{equation}
Integrating \eqref{eq-cs-fvwtq} over $\bbr^3 \times \bbr^3$ and using Lemma \ref{lm-cs-s-emapriori}, we have
\begin{equation}\label{eq-fvwtlq-gron}
  \begin{aligned}
    \frac{d}{dt}\left|f\langle \bv \rangle^k\right|_{L^q}^q \le& C(q, \varphi) \left|f\langle \bv \rangle\right|_{L^1}\left|f\langle \bv \rangle^k\right|_{L^q}^q +C(q)\left(1+ \left|\bu \right|_{L^\infty}\right)\left|f\langle \bv \rangle^k\right|_{L^q}^q\\
    \le& C(q, \varphi, f_0, E_0) \left|f\langle \bv \rangle^k\right|_{L^q}^q +C(q) \left|\bu \right|_{L^\infty} \left|f\langle \bv \rangle^k\right|_{L^q}^q.
  \end{aligned}
\end{equation}
Using the assumption that $\text{supp}_{\bv}f_0(\bx,\cdot)\subseteq B(R_0)$ for all $\bx \in \bbr^3$ and \eqref{eq-ulinfty-est}, we solve the above Gronwall's inequality to obtain
\begin{equation}\label{eq-fvwt-lqest}
  \begin{aligned}
    \left|f(t)\langle \bv \rangle^k\right|_{L^q} \le& \left|f_0 \langle \bv \rangle^k\right|_{L^q} \exp\left(Ct+ C\int_0^t \left|\bu \right|_{L^\infty} d\tau \right)\\
    \le& C \exp\left(C\Big(1+ t^{\frac72}\Big) e^{Ct} \right),\quad \text{where $C:=C(q, \varphi, R_0, f_0, \bu_0, E_0)$}.
  \end{aligned}
\end{equation}
Employing Proposition \ref{prop-s} again, we know
\begin{equation}\label{eq-sq-apriori}
  \int_0^T \left|\bu_t \right|_{L^q}^2 dt+ \int_0^T \left|\nabla^2\bu \right|_{L^q}^2 dt \le C\left(\left|\bu_0\right|_{H^2}^2+\int_0^T \left|\int_{\bbr^3} f (\bv-\bu )d\bv \right|_{L^q}^2 dt\right).
\end{equation}
It is easy to see that
\begin{equation}\label{eq-couptermlq}
  \begin{aligned}
    \left|\int_{\bbr^3} f (\bv-\bu )d\bv \right|_{L^q}\le& \left|\int_{\bbr^3} f \bv d\bv\right|_{L^q} +\left|\int_{\bbr^3} f  d\bv\right|_{L^q} \left|\bu \right|_{L^{\infty}}\\
    \le& C\left|f\langle \bv \rangle^4\right|_{L^q}+ C\left|f\langle \bv \rangle^4\right|_{L^q} \left|\nabla \bu \right|_{L^2}^{1-\theta_1} \left|\nabla^2 \bu \right|_{L^q}^{\theta_1},
  \end{aligned}
\end{equation}
where we have used the inequality
\[
 \left|\int_{\bbr^3}f \langle \bv \rangle d\bv \right|_{L^q} \le \left|f\langle \bv \rangle^4\right|_{L^q}\left|\langle \bv \rangle^{-3}\right|_{L^{\frac{q}{q-1}}}\le C\left|f\langle \bv \rangle^4\right|_{L^q}
\]
and the Sobolev inequality in $\bbr^3$,
\begin{equation}\label{eq-soboq-linfty}
 \left|\bu \right|_{L^{\infty}}  \le C \left|\nabla \bu \right|_{L^2}^{1-\theta_1} \left|\nabla^2 \bu \right|_{L^q}^{\theta_1} \quad \text{with $-\frac{1-\theta_1}{2}+ \left(2- \frac3q \right) \theta_1 =0$.}
\end{equation}
From \eqref{eq-couptermlq}, we deduce that
\begin{equation}\label{eq-couptermlq-est}
  \begin{aligned}
    &C\int_0^T \left|\int_{\bbr^3} f (\bv-\bu )d\bv \right|_{L^q}^2 dt\\
    \le& \int_0^T \left[C\left|f\langle \bv \rangle^4\right|_{L^q}^2+ C(q)\left|f\langle \bv \rangle^4\right|_{L^q}^{\frac{2}{1-\theta_1}} \left|\nabla \bu \right|_{L^2}^2 +\frac12 \left|\nabla^2 \bu \right|_{L^q}^2 \right] dt\\
    \le& CT \sup_{0 \le t \le T} \left|f\langle \bv \rangle^4\right|_{L^q}^2 +C(q) \sup_{0 \le t \le T} \left|f\langle \bv \rangle^4\right|_{L^q}^{\frac{2}{1-\theta_1}} \int_0^T \left|\nabla \bu \right|_{L^2}^2 dt +\frac12 \int_0^T \left|\nabla^2 \bu \right|_{L^q}^2 dt.
  \end{aligned}
\end{equation}
Substituting \eqref{eq-couptermlq-est} into \eqref{eq-sq-apriori}, we obtain by Lemma \ref{lm-cs-s-emapriori}, and \eqref{eq-fvwt-lqest} for $k=4$ that
\begin{equation}\label{eq-gradtwou-lq}
  \begin{aligned}
    \int_0^T \left|\nabla^2\bu \right|_{L^q}^2 dt \le& C\left|\bu_0\right|_{H^2}^2+ CT \sup_{0 \le t \le T} \left|f\langle \bv \rangle^4\right|_{L^q}^2 +C \sup_{0 \le t \le T} \left|f\langle \bv \rangle^4\right|_{L^q}^{\frac{2}{1-\theta_1}} \int_0^T \left|\nabla \bu \right|_{L^2}^2 dt\\
    \le& C(1+T) \exp\left(C\Big(1+T^{\frac72}\Big)e^{CT} \right),
  \end{aligned}
\end{equation}
where $C:=C(q, \varphi, R_0, f_0, \bu_0, E_0)$.
Using the Sobolev inequality in $\bbr^3$,
\begin{equation}\label{eq-sobo-gradlinfty}
 \left|\nabla \bu \right|_{L^{\infty}}  \le C \left|\nabla \bu \right|_{L^2}^{1-\theta_2} \left|\nabla^2 \bu \right|_{L^q}^{\theta_2} \quad \text{ with $-\frac{1-\theta_2}{2}+ \left(2- \frac3q \right) \theta_2 =1$,}
\end{equation}
we deduce that
\begin{equation}\label{eq-gradulinfty-est}
  \begin{aligned}
    \int_0^T\left|\nabla \bu\right|_{L^{\infty}} dt\le& C\int_0^T \left|\nabla \bu \right|_{L^2}^{1-\theta_2} \left|\nabla^2 \bu \right|_{L^q}^{\theta_2} dt\\
      \le& C \int_0^T \left(\left|\nabla \bu \right|_{L^2} + \left|\nabla^2 \bu \right|_{L^q}\right) dt\\
      \le& C T^{\frac12} \left[ \left(\int_0^T\left|\nabla\bu\right|_{L^2}^2 dt \right)^{\frac12} + \left(\int_0^T\left|\nabla^2\bu\right|_{L^q}^2 dt \right)^{\frac12} \right]\\
      \le& C(1+T) \exp\left(C\Big(1+T^{\frac72}\Big)e^{CT} \right),
  \end{aligned}
\end{equation}
where $C:=C(q, \varphi, R_0, f_0, \bu_0, E_0)$. Adding \eqref{eq-ulinfty-est} to \eqref{eq-gradulinfty-est} yields
\begin{equation}\label{eq-woneulinfty-est}
  \begin{aligned}
    \int_0^T\left|\bu\right|_{W^{1, \infty}} dt\le& \int_0^T\left|\bu\right|_{L^{\infty}} dt +\int_0^T\left|\nabla \bu\right|_{L^{\infty}} dt\\
      \le& C(1+T) \exp\left(C\Big(1+T^{\frac72}\Big)e^{CT} \right),
  \end{aligned}
\end{equation}
where $C:=C(q, \varphi, R_0, f_0, \bu_0, E_0)$.
This completes the proof.
\end{proof}
With Lemma \ref{lm-cs-s-graduwoneinftapriori} at hand, we then deduce the a priori estimates on $f$ and $\bu$ in the strong solution space.
\begin{lemma}\label{lm-cs-s-apriori}
Under the conditions in Theorem \ref{thm-exist}, if $(f, \bu)$ is a classical solution to \eqref{eq-cs-s}-\eqref{eq-sys-inidata}, then it holds for all $T \in (0, \infty)$ that
\[
 \begin{aligned}
   (i)&\ R(T)\le R_0 + C\Big(1+T^{\frac72}\Big)e^{CT}, \quad \text{where $C:=C(\varphi, f_0, \bu_0, E_0)$};\\
   (ii)&\ \sup_{0 \le t \le T}\left|f(t) \right|_{H_{\omega}^1}\le \left|f_0 \right|_{H_{\omega}^1} \exp\left( C(1+T) \exp\Big(C\big(1+T^{\frac72}\big)e^{CT} \Big)\right),\\ &\quad \text{where $C:=C(q, \varphi, R_0, f_0, \bu_0, E_0)$};\\
   (iii)&\sup_{0 \le t \le T} \left|\bu(t)\right|_{H^2}
      \le C\left(1+\left|f_0 \right|_{H_{\omega}^1}^2\right) \left(1+T^{\frac12}\right) \exp\left( C(1+T) \exp\Big(C\big(1+T^{\frac72}\big)e^{CT} \Big)\right),\\
       &\quad \text{where $C:=C(q, \varphi, R_0, f_0, \bu_0, E_0)$.}
 \end{aligned}
\]
\end{lemma}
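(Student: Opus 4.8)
The plan is to obtain (i) and (ii) as essentially immediate consequences of Proposition \ref{prop-kine-cs-wp} combined with Lemma \ref{lm-cs-s-graduwoneinftapriori}, and to obtain (iii) by rerunning the energy estimates of Section \ref{set-loc-exist} with the now-available a priori control of $\sup_{[0,T]}|f(t)|_{H_{\omega}^1}$. For (i), Proposition \ref{prop-kine-cs-wp}(i) reduces matters to bounding $\int_0^t|\mathbf{b}(\tau)|_{L^{\infty}}d\tau$ and $\int_0^t|\bu(\tau)|_{L^{\infty}}d\tau$. Since $f\ge0$ and $0\le\varphi\le1$, Cauchy--Schwarz in $\bv^*$ together with the conservation of mass and the energy identity of Lemma \ref{lm-cs-s-emapriori} give $|\mathbf{b}(t,\bx)|\le\int_{\bbr^6} f(t,\by,\bv^*)|\bv^*|\,d\by d\bv^*\le|f_0|_{L^1}^{1/2}(2E_0)^{1/2}\le C(\varphi,f_0,E_0)$, whence $\int_0^t|\mathbf{b}(\tau)|_{L^{\infty}}d\tau\le CT$. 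Adding the bound on $\int_0^t|\bu(\tau)|_{L^{\infty}}d\tau$ from Lemma \ref{lm-cs-s-graduwoneinftapriori} yields (i). For (ii), substitute the bound on $R(\tau)$ just obtained and the time-integrated bound on $|\bu(\tau)|_{W^{1,\infty}}$ from Lemma \ref{lm-cs-s-graduwoneinftapriori} into the exponent in Proposition \ref{prop-kine-cs-wp}(ii); the $W^{1,\infty}$ integral is the dominant contribution, and absorbing the polynomial-in-$T$ terms gives the stated triple-exponential bound.

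For (iii) I would proceed in four steps. First, Lemma \ref{lm-cs-s-emapriori}(iii) already gives $\sup_{[0,T]}|\bu(t)|_{L^2}^2\le 2E_0$ and $\int_0^T|\nabla\bu(t)|_{L^2}^2\,dt\le E_0$. Second, testing $\eqref{eq-cs-s}_2$ with $\bu_t$ and estimating the coupling term exactly as for \eqref{eq-appro-dif-gradusqua} yields
\[
 \frac{d}{dt}|\nabla\bu|_{L^2}^2+|\bu_t|_{L^2}^2\le C|f|_{H_{\omega}^1}^2\big(1+|\nabla\bu|_{L^2}^2\big),
\]
and since $\int_0^T|f|_{H_{\omega}^1}^2\,dt$ is controlled by part (ii), Gronwall's inequality bounds $\sup_{[0,T]}|\nabla\bu(t)|_{L^2}^2+\int_0^T|\bu_t(t)|_{L^2}^2\,dt$.

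Third — the decisive step — differentiating $\eqref{eq-cs-s}_2$ in $t$ and testing with $\bu_t$, as in \eqref{eq-appro-difttu}--\eqref{eq-approdif-gradusqua-rh}, produces
\[
 \frac{d}{dt}|\bu_t|_{L^2}^2+|\nabla\bu_t|_{L^2}^2\le C|f|_{H_{\omega}^1}^4+C|f|_{H_{\omega}^1}^2\big(1+|\nabla\bu|_{L^2}^2+|\bu|_{L^{\infty}}^2\big),
\]
with $|\bu_t(0)|_{L^2}^2\le C(1+|\bu_0|_{H^2}^2)(1+|f_0|_{H_{\omega}^1}^2)$ exactly as in \eqref{eq-appro-dift-usquainit}. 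The new feature compared with Section \ref{set-loc-exist} is that the right-hand side carries the genuine $|\bu|_{L^{\infty}}^2$ rather than a previous iterate, and no pointwise-in-time $L^{\infty}$ bound on $\bu$ is at hand; this is where I expect the main (though still mild) obstacle. It is overcome by observing that $\int_0^T|\bu|_{L^{\infty}}^2\,dt$ is already controlled: the Sobolev inequality gives $|\bu|_{L^{\infty}}^2\le C|\nabla\bu|_{L^2}|\nabla^2\bu|_{L^2}\le C(|\nabla\bu|_{L^2}^2+|\nabla^2\bu|_{L^2}^2)$, and $\int_0^T|\nabla^2\bu|_{L^2}^2\,dt$ was bounded in \eqref{eq-gradtwou-ltwo} within the proof of Lemma \ref{lm-cs-s-graduwoneinftapriori}. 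Gronwall's inequality then bounds $\sup_{[0,T]}|\bu_t(t)|_{L^2}^2+\int_0^T|\nabla\bu_t(t)|_{L^2}^2\,dt$.

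Fourth, projecting $\eqref{eq-cs-s}_2$ onto the divergence-free fields as in \eqref{eq-cs-s-df} and applying elliptic regularity and Lemma \ref{lem-heim-deco} gives
\[
 |\bu(t)|_{D^2}\le C|\bu_t(t)|_{L^2}+C|f(t)|_{H_{\omega}^1}\big(1+|\nabla\bu(t)|_{L^2}\big),
\]
so the bounds from the first three steps together with part (ii) bound $\sup_{[0,T]}|\bu(t)|_{D^2}$; combined with the $L^2$- and $|\nabla\bu|_{L^2}$-bounds this yields the estimate on $\sup_{[0,T]}|\bu(t)|_{H^2}$. Tracking constants, the factor $(1+|f_0|_{H_{\omega}^1}^2)$ comes from taking square roots in the elliptic and $\bu_t$ estimates (whose right-hand sides are quartic in $|f|_{H_{\omega}^1}$), the $(1+T^{1/2})$ prefactor from the $T$ produced by time-integrating the quartic term, and the triple-exponential factor from the Gronwall exponents inherited through parts (i)--(ii).
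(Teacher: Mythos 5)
Your treatments of (i) and (ii) match the paper's reasoning: (i) follows from Proposition \ref{prop-kine-cs-wp}(i), the bound $|\mathbf{b}(t)|_{L^{\infty}}\le |f_0|_{L^1}^{1/2}(2E_0)^{1/2}$ from Lemma \ref{lm-cs-s-emapriori}, and \eqref{eq-ulinfty-est}; (ii) is precisely the paper's \eqref{eq-fwt-apri}.

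For (iii) you arrive at the same conclusion but take a genuinely different route in the central differential inequality. When differentiating $\eqref{eq-cs-s}_2$ in time and testing with $\bu_t$, you replicate the Section~\ref{set-loc-exist} estimate \eqref{eq-approdif-gradusqua-rh}, so the coupling term $\int\!\!\int f\,\bu\cdot\bu_t\,d\bv d\bx$ is bounded through $|\bu|_{L^{\infty}}$ and produces a source $C|f|_{H_{\omega}^1}^2|\bu|_{L^\infty}^2$; you then control $\int_0^T|\bu|_{L^\infty}^2\,dt$ by $C\int_0^T(|\nabla\bu|_{L^2}^2+|\nabla^2\bu|_{L^2}^2)\,dt$, the second integral coming from \eqref{eq-gradtwou-ltwo}. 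The paper instead (see \eqref{eq-s-gradu-gron}, \eqref{eq-s-udt-qone}--\eqref{eq-s-udt-qtwo}) never uses $|\bu|_{L^\infty}$ in this step: it bounds $\int\!\!\int f\,\bu\cdot\bu_t\le C\left|\int f\,d\bv\right|_{L^{3/2}}|\bu|_{L^6}|\bu_t|_{L^6}\le C|f|_{L_\omega^2}|\nabla\bu|_{L^2}|\nabla\bu_t|_{L^2}$ (and similarly pushes the gradient onto $\bu_t$ in the $|\nabla\bu|_{L^2}^2$-estimate, yielding a $\tfrac14|\nabla\bu_t|_{L^2}^2$ that is absorbed after adding \eqref{eq-s-gradu-gron} and \eqref{eq-sudt-gron-short}). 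This keeps the combined inequality \eqref{eq-sudt-gradu-gron-short} in a closed Gronwall form with no auxiliary time-integral inputs; your version is equally valid but needs the extra ingredient $\int_0^T|\nabla^2\bu|_{L^2}^2\,dt<\infty$ borrowed from the proof of Lemma \ref{lm-cs-s-graduwoneinftapriori}, whereas the paper's reorganization makes (iii) self-contained modulo (ii). Your fourth step (elliptic estimate on the Helmholtz-projected equation) coincides with \eqref{eq-gradutwo-apri}.
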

\begin{proof}
(i)\ Using Proposition \ref{prop-kine-cs-wp}, Lemma \ref{lm-cs-s-emapriori} and \eqref{eq-woneulinfty-est}, we know
\begin{equation}\label{eq-fvsupp-apri}
  \begin{aligned}
    R(T)\le& R_0+\int_0^T(|\mathbf{b}(t)|_{L^{\infty}}+|\bu(t)|_{L^{\infty}})dt\\
        \le& R_0 + C\Big(1+T^{\frac72}\Big)e^{CT} \quad \text{where $C:=C(\varphi, f_0, \bu_0, E_0)$},
  \end{aligned}
\end{equation}
and
\begin{equation}\label{eq-fwt-apri}
  \begin{aligned}
    \sup_{0 \le t \le T} \left|f(t)\right|_{H_{\omega}^1}\le& \left|f_0\right|_{H_{\omega}^1}\exp \left( C \int_0^T \Big(1+R(t)+|\bu(t)|_{W^{1,\infty}}\Big)d t \right)\\
      \le& \left|f_0 \right|_{H_{\omega}^1} \exp\left( C(1+T) \exp\Big(C\big(1+T^{\frac72}\big)e^{CT} \Big)\right),
  \end{aligned}
\end{equation}
where $C:=C(q, \varphi, R_0, f_0, \bu_0, E_0)$.
Multiplying $\eqref{eq-cs-s}_2$ by $\bu_t$, and integrating the resulting equation over $\bbr^3$ give
\begin{equation}\label{eq-s-gradu-gron}
  \begin{aligned}
    \frac12 \frac{d}{dt}\left|\nabla \bu \right|_{L^2}^2  +\left|\bu_t \right|_{L^2}^2  =& \int_{\bbr^3}\int_{\bbr^3}f (\bv-\bu) \cdot \bu_t d\bv d\bx \\
    \le & C\left|\int_{\bbr^3}f \bv d\bv \right|_{L^{\frac65}}\left|\nabla \bu_t \right|_{L^2} +C\left|\int_{\bbr^3}f d\bv \right|_{L^{\frac32}}\left|\nabla \bu \right|_{L^2} \left|\nabla \bu_t \right|_{L^2}\\
    \le& \frac14 \left|\nabla \bu_t \right|_{L^2}^2 +C\left|f(t)\right|_{L_{\omega}^2}^2 \left(1+\left|\nabla \bu \right|_{L^2}^2 \right).
  \end{aligned}
\end{equation}
Differentiating $\eqref{eq-cs-s}_2$ with respect to $t$ yields
\begin{equation}\label{eq-s-dt}
  \bu_{tt}+\nabla P_t=\Delta \bu_t +\int_{\bbr^3}f_t(\bv-\bu)d\bv -\int_{\bbr^3}f d\bv \bu_t.
\end{equation}
Multiplying \eqref{eq-s-dt} by $\bu_t$, and integrating the resulting equation over $\bbr^3$, we deduce that
\begin{equation}\label{eq-s-udt-gron}
  \begin{aligned}
    &\frac12 \frac{d}{dt}\left| \bu_t \right|_{L^2}^2  +\left|\nabla \bu_t \right|_{L^2}^2\\
      \le& \int_{\bbr^3}\int_{\bbr^3}f_t (\bv-\bu) \cdot \bu_t d\bv d\bx \\
     =& \int_{\bbr^3}\int_{\bbr^3}\left[-\bv \cdot \nabla_{\bx} f- \nabla_{\bv} \cdot (L[f]f+(\bu-\bv)f)\right] (\bv-\bu) \cdot \bu_t d\bv d\bx\\
     =& \int_{\bbr^3}\int_{\bbr^3}f\bv \otimes (\bv-\bu) d\bv : \nabla \bu_t d\bx - \int_{\bbr^3}\int_{\bbr^3}f\bv d\bv \cdot \nabla \bu \cdot \bu_t d\bx  \\
      & +\int_{\bbr^3}\int_{\bbr^3} \Big(L[f]f+(\bu-\bv)f \Big) \cdot \bu_t d\bv d\bx\\
     =:&Q_1+ Q_2.
  \end{aligned}
\end{equation}
We estimate each $Q_i$ ($i=1,2$) as follows.
\begin{equation}\label{eq-s-udt-qone}
 \begin{aligned}
    Q_1=&\int_{\bbr^3}\int_{\bbr^3}f\bv \otimes (\bv-\bu) d\bv : \nabla \bu_t d\bx - \int_{\bbr^3}\int_{\bbr^3}f\bv d\bv \cdot \nabla \bu \cdot \bu_t d\bx \\
     \le &C\left| \int_{\bbr^3}f\bv^2 d\bv \right|_{L^2} \left|\nabla \bu_t \right|_{L^2} +C\left| \int_{\bbr^3}f\bv d\bv \right|_{L^3} \left|\nabla \bu \right|_{L^2} \left|\nabla \bu_t \right|_{L^2}\\
     \le& \frac18 \left|\nabla \bu_t \right|_{L^2}^2 +C\left|f(t)\right|_{H_{\omega}^1}^2 \left(1+\left|\nabla \bu \right|_{L^2}^2 \right);
    \end{aligned}
\end{equation}
\begin{equation}\label{eq-s-udt-qtwo}
  \begin{aligned}
    Q_2=&\int_{\bbr^3}\int_{\bbr^3} \Big(L[f]f+(\bu-\bv)f \Big) \cdot \bu_t d\bv d\bx\\
    \le& C(\varphi) \left| f \langle\bv \rangle \right|_{L^1} \left| f \langle\bv \rangle \right|_{L^{\frac65}} \left|\nabla \bu_t \right|_{L^2}\\ &+C\left| \int_{\bbr^3}f d\bv \right|_{L^{\frac32}} \left|\nabla \bu \right|_{L^2} \left|\nabla \bu_t \right|_{L^2} +C\left| \int_{\bbr^3}f\bv d\bv \right|_{L^{\frac65}} \left|\nabla \bu_t \right|_{L^2}\\
    \le& \frac18 \left|\nabla \bu_t \right|_{L^2}^2 +C(\varphi, f_0, E_0)\left|f(t)\right|_{L_{\omega}^2}^2 \left(1+\left|\nabla \bu \right|_{L^2}^2 \right).
  \end{aligned}
\end{equation}
Substituting \eqref{eq-s-udt-qone} and \eqref{eq-s-udt-qtwo} into \eqref{eq-s-udt-gron} results in
\begin{equation}\label{eq-sudt-gron-short}
  \frac12 \frac{d}{dt}\left| \bu_t \right|_{L^2}^2  +\left|\nabla \bu_t \right|_{L^2}^2  \le \frac14 \left|\nabla \bu_t \right|_{L^2}^2 +C(\varphi, f_0, E_0)\left|f(t)\right|_{H_{\omega}^1}^2 \left(1+\left|\nabla \bu \right|_{L^2}^2 \right).
\end{equation}
Adding \eqref{eq-s-gradu-gron} to \eqref{eq-sudt-gron-short} gives
\begin{equation}\label{eq-sudt-gradu-gron-short}
   \frac{d}{dt}\left( \left|\nabla \bu \right|_{L^2}^2+ \left| \bu_t \right|_{L^2}^2 \right)  +\left|\bu_t \right|_{H^1}^2  \le  C(\varphi, f_0, E_0)\left|f(t)\right|_{H_{\omega}^1}^2 \left(1+\left|\nabla \bu \right|_{L^2}^2 \right).
\end{equation}
Since $\bu_t \in C([0,T];L^2)$, we have
\begin{equation}\label{eq-udt-initi}
  \begin{aligned}
    \left|\bu_t(0) \right|_{L^2}^2=&\left|\Delta \bu_0 +\mathcal P \int_{\bbr^3} f_0(\bv-\bu_0)d\bv \right|_{L^2}^2\\
     \le& C \left|\bu_0 \right|_{D^2}^2 +C\left|f_0 \right|_{L_{\omega}^2}^2 +C\left|f_0 \right|_{L_{\omega}^2}^2\left|\bu_0 \right|_{H^2}^2\\
     \le& C\Big(1+\left|\bu_0 \right|_{H^2}^2\Big) \Big(1+\left|f_0 \right|_{H_{\omega}^1}^2\Big).
  \end{aligned}
\end{equation}
Integrating \eqref{eq-sudt-gradu-gron-short} over $[0, T]$, we obtain by Lemma \ref{lm-cs-s-emapriori}, \eqref{eq-fwt-apri} and \eqref{eq-udt-initi} that
\begin{equation}\label{eq-udt-gradu-apri}
  \begin{aligned}
    &\sup_{0 \le t \le T} \left(\left|\nabla \bu\right|_{L^2}^2 + \left| \bu_t \right|_{L^2}^2\right) +\int_0^T \left|\bu_t \right|_{H^1}^2dt \\
     \le& C(f_0, \bu_0) +C\sup_{0 \le t \le T} \left|f(t) \right|_{H_{\omega}^1}^2 \left( T + \int_0^T \left|\nabla \bu \right|_{L^2}^2dt\right)\\
      \le& C\left(1+\left|f_0 \right|_{H_{\omega}^1}^2\right) (1+T) \exp\left( C(1+T) \exp\Big(C\big(1+T^{\frac72}\big)e^{CT} \Big)\right),
  \end{aligned}
\end{equation}
where $C:=C(q, \varphi, R_0, f_0, \bu_0, E_0)$. Using \eqref{eq-udt-gradu-apri} and the elliptic estimate on \eqref{eq-cs-s-df}, we deduce that
\begin{equation}\label{eq-gradutwo-apri}
  \begin{aligned}
    \sup_{0 \le t \le T} \left|\nabla \bu\right|_{D^2}^2 \le& C \sup_{0 \le t \le T} \left(\left| \bu_t \right|_{L^2}^2 +\left|\int_{\bbr^3} f(\bv-\bu)d\bv \right|_{L^2}^2 \right)\\
     \le&     C \sup_{0 \le t \le T} \left(\left| \bu_t \right|_{L^2}^2 + \left|\int_{\bbr^3} f \bv d\bv \right|_{L^2}^2 + \left|\int_{\bbr^3} f d\bv \right|_{L^3}^2 \left|\nabla \bu\right|_{L^2}^2 \right)\\
      \le& C\left(1+\left|f_0 \right|_{H_{\omega}^1}^4\right)(1+T) \exp\left( C(1+T) \exp\Big(C\big(1+T^{\frac72}\big)e^{CT} \Big)\right),
  \end{aligned}
\end{equation}
where $C:=C(q, \varphi, R_0, f_0, \bu_0, E_0)$. Combining Lemma \ref{lm-cs-s-emapriori}, \eqref{eq-udt-gradu-apri} and \eqref{eq-gradutwo-apri}, we can easily obtain
\begin{equation}\label{eq-uhtwo-apri}
    \sup_{0 \le t \le T} \left|\bu(t)\right|_{H^2}
      \le C\left(1+\left|f_0 \right|_{H_{\omega}^1}^2\right) \left(1+T^{\frac12}\right) \exp\left( C(1+T) \exp\Big(C\big(1+T^{\frac72}\big)e^{CT} \Big)\right).
\end{equation}
where $C:=C(q, \varphi, R_0, f_0, \bu_0, E_0)$.
This completes the proof.
\end{proof}
%%%%%%%%%%%%%%%%%%%%%%%%%%%%%%%%%%%%%%%%%%%%%%%%%%%%%%%%%%%%%%%%%%%%%%%%%%%%%%%%%%%%%%%%%%%%%%%%%%%%%%%%%%%
%
%                                Sect.5  Global Existence of Strong Solutions
%
%%%%%%%%%%%%%%%%%%%%%%%%%%%%%%%%%%%%%%%%%%%%%%%%%%%%%%%%%%%%%%%%%%%%%%%%%%%%%%%%%%%%%%%%%%%%%%%%%%%%%%%%%%%
\section{Global Existence of Strong Solutions}\label{sec-glob-exst}
\setcounter{equation}{0}
Combining the local-in-time existence result  with the a priori estimates on the coupled model, we present the proof of Theorem \ref{thm-exist}.
\vskip 3mm
\noindent \textit{Proof of Theorem \ref{thm-exist}.} From Proposition \ref{prop-loc-exist}, we know there exists some $T_0>0$ such that \eqref{eq-cs-s}-\eqref{eq-sys-inidata} admits a unique strong solution in $[0, T_0]$. Take the supremum among all the $T_0$, and denote the life span by $T^*$. Next, we prove $T^*=\infty$ by contradiction. Suppose not, i.e., $T^*<\infty$. We mollify the initial data by convolving with the standard mollifier, and then take limit to the approximate classical solutions. It follows from Lemma \ref{lm-cs-s-apriori} that the local strong solutions satisfy
\begin{equation}\label{eq-rfusup}
 \begin{aligned}
   &\sup_{0 \le t < T^*}R(t)\le C(T^*);\\
   &\sup_{0 \le t < T^*}\left|f(t)\right|_{H_{\omega}^1}\le C(T^*);\\
   &\sup_{0 \le t < T^*} \left|\bu(t)\right|_{H^2}
      \le C(T^*).
 \end{aligned}
\end{equation}
In term of the continuity of $f(t)$, $\bu(t)$ and $R(t)$, we can define
\begin{equation}\label{eq-rfulim}
 \begin{aligned}
   &f(T^*)= \lim_{t \to T^*-}f(t) \quad \text{in $H_{\omega}^1(\bbr^3\times \bbr^3)$};\\
   &\bu(T^*)=\lim_{t \to T^*-}\bu(t) \quad \text{in $H^2(\bbr^3)$}.
 \end{aligned}
\end{equation}
From $\eqref{eq-rfusup}_1$, we know $R(T^*)\le C(T^*)$. Thus, we can take $\Big(f(T^*), \bu(T^*)\Big)$ as an initial datum, and use Proposition \ref{prop-loc-exist} to extend the life span beyond $T^*$. Therefore, $T^*=\infty$, i.e., the system \eqref{eq-cs-s}-\eqref{eq-sys-inidata} admits global-in-time strong solutions. The uniqueness of strong solutions can be proved in the same way as in Proposition \ref{prop-loc-exist}. This completes the proof. $\hfill \square$

%\vskip 3mm
%\noindent\textbf{Acknowledgements}. The author would like to thank the anonymous referees for their detailed comments and valuable advice, which improve the presentation of this paper significantly.
%\bibliographystyle{plain}
%\bibliography{Jin-Ref}

\end{document}